\def\bibtex@style{amsrn-mod}
\renewcommand{\MR}[1]{} 
\renewcommand{\PrintDOI}[1]{}
\newtheorem{Theorem}{Theorem}[section]
\newtheorem{Lemma}[Theorem]{Lemma}
\newtheorem{corollary}[Theorem]{Corollary} 
\newtheorem{lemma}[Theorem]{Lemma}
\newtheorem{proposition}[Theorem]{Proposition}
\theoremstyle{definition} 
\newtheorem{remark}[Theorem]{Remark}
\newtheorem*{Remark}{Remark} 
 \newtheorem*{Acknowledgements}{Acknowledgements}
\newtheorem*{Example}{Example}
\newcommand{\cn}{\mathcal {N}} 
\newcommand{\cP}{\mathcal {P}} 
\newcommand{\cS}{\mathcal {S}} 
\newcommand{\cu}{\mathcal {U}} 
\newcommand{\nn}{{\mathbb N}} 
\renewcommand{\qq }{{\mathbb Q}} 
\newcommand{\rr}{{\mathbb R}} 
\newcommand{\zz}{{\mathbb Z}} 
\newcommand{\Fp}{{\mathbb F_p}} 
\newcommand{\detg}{\operatorname{det_G}} 
\newcommand{\tor}{\operatorname{tor}}
\newcommand{\rk}{\operatorname{rk}}
\DeclareMathOperator{\logtor}{logtor}
\newcommand{\Wh}{\operatorname{Wh}} 
\newcommand{\coker}{\operatorname{coker}} 
\DeclarePairedDelimiter\abs{\lvert}{\rvert}
\DeclarePairedDelimiter\floor{\lfloor}{\rfloor}
\DeclarePairedDelimiter\norm{\lVert}{\rVert} 
\newcommand{\M}[4]{ #2 \cup_{#3} M(#1)}   
\DeclareMathOperator{\rchi}{ \bar\chi} 
\renewcommand{\gg}{\gamma} 
\newcommand{\gr}{\rho} 
\newcommand{\gs}{\sigma} 
\newcommand{\gt}{\tau} 
\newcommand{\gl}{\lambda} 
\newcommand{\gG}{\Gamma} 
\newcommand{\gS}{\Sigma} 
\renewcommand{\d}{\partial} 
\newcommand{\id}{\operatorname{id}} 
\renewcommand{\ker}{\operatorname{Ker}} 
\newcommand{\im}{\operatorname{Im}} 
\newcommand{\Lk}{\operatorname{Lk}} 
\newcommand{\vol}{\operatorname{Vol}} 
\newcommand{\conv}{\operatorname{Conv}} 
\newcommand{\supp}{\operatorname{Supp}} 
\newcommand{\thi}{\operatorname{th}}
\newenvironment{enumeratei}{ 
\begin{enumerate}[\upshape (i)]}	
	{ 
\end{enumerate}
}
\numberwithin{equation}{section} 
\begin{document}\title{Torsion invariants of complexes of groups}

\author{Boris Okun} \address{Department of Mathematical Sciences, University of Wisconsin--Milwaukee, Milwaukee, WI 53201},
\email{okun@uwm.edu}

\author{Kevin Schreve} \address{Department of Mathematics, Louisiana State University, Baton Rouge, LA~70806}
\email{kschreve@lsu.edu}
\date{\today}
\begin{abstract}
    Suppose a residually finite group $G$ acts cocompactly on a contractible complex with strict fundamental domain $Q$, where the stabilizers are either trivial or have normal $\zz$-subgroups.
    Let $\d Q$ be the subcomplex of $Q$ with nontrivial stabilizers.
    Our main result is a computation of the homology torsion growth of a chain of finite index normal subgroups of $G$.
    We show that independent of the chain, the normalized torsion limits to the torsion of $\d Q$, shifted a degree.
    Under milder assumptions of acyclicity of nontrivial stabilizers, we show similar formulas for the mod $p$-homology growth.
    We also obtain formulas for the universal and the usual $L^2$-torsion of $G$ in terms of the torsion of stabilizers and topology of $\d Q$.
    In particular, we get complete answers for right-angled Artin groups, which shows they satisfy a torsion analogue of L\"uck approximation theorem.
\end{abstract}

\subjclass{Primary: 20F65; Secondary: 57M07, 20E26}

\maketitle

\section{Introduction} Let $G$ be a residually finite group of type $F$, and let $\{\gG_{k}\}_{k \in \nn}$ be a nested chain of finite index, normal subgroups of $G$ with $\bigcap_{k} \gG_{k} = 1$.
In this paper, we are interested in the normalized growth of the homology invariants:
\[
b_{i}^{(2)}(G; \Fp)= \limsup_{k} \frac{b_{i}(B\gG_{k}; \Fp)}{[G:\gG_{k}]}, \quad t_{i}^{(2)}(G)=\limsup_{k} \frac{\logtor{H_{i}(B\gG_{k})}}{[G:\gG_{k}]}.
\]
Here $b_{i}(B\gG_{k}; \Fp)=\dim_{\Fp} H_{i}(B\gG_{k}; \Fp)$ denotes the $i^{th}$ Betti number with coefficients in a field $\Fp$, and $\logtor H_{i}(B\gG_{k})$ denotes the logarithm of the order of the torsion subgroup $\tor H_{i}(B\gG_{k})$ of the integral homology.

We will call the first quantity the $i^{th}$ $\Fp$-$L^2$-Betti number of $G$, and the second the \emph{$i^{th}$ torsion growth} of $G$.
By L\"uck's approximation theorem \cite{l94}, if instead of $\Fp$ we take rational coefficients, the first quantity coincides with the $i^{th}$ $L^2$-Betti number of $G$, $b_{i}^{(2)}(G)$, and therefore is an honest limit and does not depend on the choice of normal chain.
Neither of the these two properties is known for the quantities above.

If $G$ is $L^2$-acyclic (has $b_{i}^{(2)}(G) = 0$ for all $i$), one can define a secondary invariant called the \emph{$L^2$-torsion} of $G$, denoted by $\gr^{(2)}(G)$.
A conjectural version~\cite{l13}*{Conjecture 1.11} of L\"uck's approximation theorem for $t_{i}^{(2)}(G)$ states that
\[
\sum_{i} (-1)^it_{i}^{(2)}(G) = \gr^{(2)}(G).
\]
Apart from several vanishing results, cf.~\cites{l13,s16,abfg21,kkn17,agn17}, very little is known about $t_{i}^{2}(G)$.
For example, if $M^3$ is a closed hyperbolic $3$-manifold, the conjecture predicts that $t_1^{(2)}(\pi_1(M^3)) = \frac{1}{6\pi}\vol(M^3)$.
In this case, L\^e~\cite{l18} proved that $t_1^{(2)}(\pi_1(M^3)) \le \frac{1}{6\pi}\vol(M^3)$, but there is no example of any aspherical $3$-manifold where it is known that $t_1^{(2)}(\pi_1(M^3)) > 0$.

Note that the universal coefficients theorem gives a lower bound
\[
t_{i}^{(2)}(G) + t_{i-1}^{(2)}(G) \geq \left(b_{i}^{(2)}(G; \Fp) - b_{i}^{(2)}(G)\right)\log p.
\]
With Avramidi we exploited this in \cite{aos21} to give examples of nontrivial torsion growth, by computing the $\Fp$-$L^2$-Betti numbers of right-angled Artin groups (RAAG's.) A new feature in this paper is an exact calculation of nonvanishing torsion growth and verification of the approximation conjecture for RAAG's.

In fact, we work in greater generality.
The natural setting for such calculations is group actions with strict fundamental domains, with suitable assumptions on stabilizers.
Recall that a strict fundamental domain for a cellular action on a complex is a subcomplex which intersects each orbit in a single point.
In particular, the quotient by the group action is isomorphic to the strict fundamental domain; hence it is a finite complex if the action is cocompact.
A general construction of actions with strict fundamental domain comes from simple complexes of groups, as in \cite{bh99}*{Chapter II.12}.
For some examples to keep in mind, note that a Euclidean triangle group acts on $\rr^2$ with strict fundamental domain, whereas there is no strict fundamental domain for the standard action of $\zz^2$ on $\rr^2$.

\subsection*{Homology growth} In Section~\ref{s:computation}, using an equivariant homology spectral sequence, we calculate the $\Fp$-$L^2$-Betti numbers of groups acting on a contractible complex with strict fundamental domain $Q$ and $\Fp$-$L^2$-acyclic stabilizers.
Our main theorem is a similar calculation for $t_{i}^{(2)}(G)$, though we need stronger assumptions on the stabilizers.
\begin{Theorem}\label{t:main}
    Let $G$ be a residually finite group which acts cocompactly on a contractible complex with strict fundamental domain $Q$.
    Suppose the stabilizer of any cell fixes it, and that each nontrivial stabilizer has a normal, infinite cyclic subgroup with type $F$ quotient.
    Let $\d Q$ be the subcomplex of $Q$ with nontrivial stabilizers.
    Then
    \[
    t_{i}^{(2)}(G)=\logtor H_{i-1}(\d Q).
    \]
\end{Theorem}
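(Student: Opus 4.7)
My approach is via the equivariant homology spectral sequence associated with the action of $\Gamma_k$ on the contractible complex $X$:
\[
E^{1}_{p,q}(\Gamma_k)=\bigoplus_{[\sigma]\in X^{(p)}/\Gamma_k} H_{q}\!\bigl((\Gamma_k)_\sigma;\zz\bigr)\;\Longrightarrow\;H_{p+q}(\Gamma_k;\zz).
\]
Since $\Gamma_k\trianglelefteq G$ with strict fundamental domain $Q$, one re-indexes over $G$-orbits: each $\sigma\in Q^{(p)}$ contributes $[G:G_\sigma\Gamma_k]$ copies of $H_q(\Gamma_k\cap G_\sigma)$. Cells outside $\d Q$ have trivial stabilizer and contribute only in row $q=0$ (assembling into the cellular chain complex of $X/\Gamma_k$), while cells of $\d Q$ feed rows $q\geq 0$. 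The degree shift $i\mapsto i-1$ in the conclusion reflects the expected signature of the $\zz$-subgroup hypothesis: a $p$-cell of $\d Q$ feeds bidegree $(p,1)$ through $H_1(Z_{k,\sigma})\cong\zz$, so the torsion of $\d Q$ in dimension $i-1$ will surface in degree $i$ of $\Gamma_k$.

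For each $\sigma\in\d Q$ I feed in the Lyndon--Hochschild--Serre spectral sequence of
\[
1\to Z_{k,\sigma}\to \Gamma_k\cap G_\sigma\to Q_{k,\sigma}\to 1,
\]
with $Z_{k,\sigma}=\Gamma_k\cap Z_\sigma\cong\zz$ and $Q_{k,\sigma}\leq Q_\sigma$ of finite index. Since $H_q(\zz)=0$ for $q\geq 2$, only rows $0,1$ survive, giving (modulo one $d_2$)
\[
0\to H_q(Q_{k,\sigma})\to H_q(\Gamma_k\cap G_\sigma)\to H_{q-1}(Q_{k,\sigma};\zz_\pm)\to 0,
\]
where $\zz_\pm$ records the conjugation $Q_\sigma\curvearrowright Z_\sigma$. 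The crucial quantitative input is $[G_\sigma:\Gamma_k\cap G_\sigma]=[Z_\sigma:Z_{k,\sigma}]\cdot[Q_\sigma:Q_{k,\sigma}]$ combined with $[Z_\sigma:Z_{k,\sigma}]\to\infty$ (forced by residual finiteness). Since $Q_\sigma$ is of type $F$, $\log\abs{H_\ast(Q_{k,\sigma})_{\tor}}$ grows at most linearly in $[Q_\sigma:Q_{k,\sigma}]$, which produces an \emph{absorption estimate}
\[
\frac{[G:G_\sigma\Gamma_k]\log\abs{H_q(\Gamma_k\cap G_\sigma)_{\tor}}}{[G:\Gamma_k]}\;=\;O\!\bigl([Z_\sigma:Z_{k,\sigma}]^{-1}\bigr)\;\longrightarrow\;0,
\]
so the intrinsic torsion of each $E^{1}$-entry is washed out after normalization.

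The surviving asymptotic torsion must therefore come from the \emph{ranks} of $E^1$-entries interacting with differentials and extensions. The plan is to show that after $d_1$-assembly with the equivariant multiplicities and the complex-of-groups combinatorics, the row $q=1$ restricted to $\d Q$ is, at the $E^2$-page and modulo already-absorbed torsion, quasi-isomorphic to a $[G:\Gamma_k]$-amplified integral cellular chain complex of $\d Q$. Its $(i-1)$-st homology then contributes exactly $[G:\Gamma_k]\cdot\log\abs{H_{i-1}(\d Q)_{\tor}}$ to $\log\abs{H_i(\Gamma_k)_{\tor}}$; meanwhile row $q=0$ (computing the torsion of the branched cover $X/\Gamma_k\to Q$) and higher rows contribute only sublinearly, the excess being fed to row $q=1$ via $d_2$ and absorbed.

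\textbf{Main obstacle.} The hardest part is the final assembly: upgrading the upper bound to an equality requires a L\"uck-type integral approximation theorem sharp enough to survive both the spectral-sequence extensions and the LHS $d_2$, and to show that the local system $\zz_\pm$ assembles to the trivial integral system on $\d Q$---using the ``stabilizer fixes cell'' hypothesis to suppress orientation-reversals. The $\Fp$-$L^2$-Betti number machinery developed in Section~\ref{s:computation} provides the template for the chain-level combinatorics, but the integral torsion case genuinely requires the cyclic normal subgroup hypothesis in order for the absorption estimate to go through.
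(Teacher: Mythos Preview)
Your localization of the torsion is wrong, and this is a genuine gap. You claim that row $q=1$ of the equivariant spectral sequence, restricted to $\d Q$, is at the $E^{2}$-page quasi-isomorphic to a $[G:\gG_k]$-amplified copy of $C_{*}(\d Q;\zz)$. But count ranks: for $\gs\in\d Q$ the number of $\gG_k$-orbits over $\gs$ is $[G:\gG_k]/[G^{\gs}:G^{\gs}\cap\gG_k]$, and $[G^{\gs}:G^{\gs}\cap\gG_k]\geq [\zz^{\gs}:\zz^{\gs}\cap\gG_k]\to\infty$. Hence the total rank of $E^{1}_{*,1}$ is $O([G:\gG_k]/m_k)$ with $m_k\to\infty$, i.e.\ \emph{sublinear}, so this row cannot carry $[G:\gG_k]\cdot\logtor H_{i-1}(\d Q)$ worth of torsion. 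The torsion you are after actually lives in row $q=0$: one has $E^{2}_{*,0}=H_{*}(Z/\gG_k)$, and $Z/\gG_k$ is $[G:\gG_k]$ copies of $Q$ glued along $\d Q$ to a complex with sublinearly many cells; the degree shift $i\mapsto i-1$ comes from $H_{i}(Q,\d Q)\cong\bar H_{i-1}(\d Q)$, not from the $\zz$-subgroup bumping things up a row. Your claim that row $q=0$ ``contributes only sublinearly'' is therefore exactly backwards.

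Even after relocating the torsion, your outline is missing the crucial ingredient: torsion is not controlled by ranks and $\logtor$ of $E^{1}$-terms alone. Passing from $E^{r}$ to $E^{r+1}$ can create torsion of size roughly $(\text{rank})\cdot\log(\text{norm of }d_r)$, and you have no mechanism to bound the norms of the differentials (your ``absorption estimate'' bounds only the intrinsic torsion of the entries). The paper's proof does \emph{not} use the spectral sequence for the torsion computation. Instead it performs a geometric rebuilding (Section~\ref{s:rebuilding}): each $BG^{\gs}$ is an $S^{1}$-bundle over $BH^{\gs}$, and one recellulates the fibre circles in the cover to obtain an explicit model $Y'_k$ for the ``$\d Q$-part'' of $B\gG_k$ with both sublinearly many cells \emph{and} boundary maps of norm polynomial in $m_k$ (Lemmas~\ref{l:control} and~\ref{l:cover}). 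One then runs the long exact sequence of the pair $(B\gG_k,Y'_k)$ directly and applies the torsion lemma (Lemma~\ref{l:logtor}), in particular part~(\ref{i:lb}), to squeeze $\logtor H_i(B\gG_k)$ between the desired value and the desired value plus $O\bigl(\frac{\log m_k}{m_k}[G:\gG_k]\bigr)$. The norm control is what makes the lower bound go through, and it is precisely what your spectral-sequence plan lacks.
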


As with the computation in \cite{aos21}, the $\limsup$ is an honest limit and is independent of the chain.
Our main class of groups which satisfy the assumptions in Theorem \ref{t:main} are residually finite Artin groups which satisfy the $K(\pi,1)$-conjecture, in particular RAAG's.
Associated to such an Artin group $A$ is a simplicial complex $L$ called the \emph{nerve} whose simplices correspond to special Artin subgroups of finite type, and $A$ acts cocompactly on a contractible complex called the Deligne complex with strict fundamental domain isomorphic to the cone on $L$ ~\cite{cd95}*{Section 1.5}.

The stabilizers of simplices are either trivial or isomorphic to Artin groups of finite type, which have normal infinite cyclic subgroups, and the simplices in $L$ correspond precisely to simplices in the Deligne complex with nontrivial stabilizer.
Therefore, we have that $t_{i}^{(2)}(A) = \logtor H_{i-1}(L)$.

For an explicit example, if $L$ is a flag triangulation of $\rr P^2$, then our earlier work with Avramidi showed that the RAAG $A_L$ had $t_2^{(2)}(A_L) > 0$, and the work in this paper shows that $t_2^{(2)}(A_L) = \log{2}.$

Our proof of Theorem \ref{t:main} uses the recent work of Abert, Bergeron, Fraczyk, and Gaboriau in \cite{abfg21}.
They developed a general strategy for showing the vanishing of $t_{i}^{(2)}(G)$ for groups which act on contractible complexes with ``cheap" infinite stabilizers.
Here, ``cheap" roughly means that finite index subgroups admit classifying spaces with sublinear (in the index) number of cells and subexponential norm of boundary maps, see Section 10 of \cite{abfg21}.
By using an effective version of Geoghegan's rebuilding procedure for the Borel construction \cite{g08}*{Section 6.1}, they build classifying spaces for finite index subgroups of $G$ by gluing together these nice classifying spaces of the stabilizers, and show the resulting spaces have vanishing torsion growth.

Our argument has two parts.
The first part is essentially handled by the method in \cite{abfg21}.
In Section \ref{s:rebuilding}, we describe an alternative approach to the effective rebuilding procedure using iterated mapping cylinders, which we find simpler.
Given a group $G$ as in Theorem \ref{t:main}, we construct a classifying space $BG$ built out of classifying spaces of the stabilizers $BG^\gs$.
There is a subcomplex $Y$ which is built from $BG^\gs$ for $\gs$ in $\d Q$, and $BG = Q \cup_{\d Q} Y$.
Since groups with normal infinite cyclic subgroups and type $F$ quotient are cheap, the lifts $Y_{k}$ of $Y$ to the finite cover $B\gG_{k}$ have vanishing torsion growth.

The second part of the argument deals with the lifts of $Q$ inside $B\gG_{k}$.
There are a linear number of such lifts (note that $Q$ is contractible), and they are all glued along lifts of $\d Q$ inside $Y_{k}$.
Therefore, we have a long exact sequence relating the homology of $B\gG_{k}$, the homology of $\d Q$, and the homology of $Y_{k}$.
The difficulty here is that torsion does not play very nicely with exact sequences; unlike Betti numbers, small torsion of a term is not implied by small torsion of its neighbors.
The fact that the norm of the attaching maps of $Q$ are bounded again by a polynomial independent of the cover, and that the homology of $\d Q$ has an upper bound on its torsion subgroups, will imply that the torsion in this long exact sequence behaves as naively expected, which implies the theorem.

\subsection*{\texorpdfstring{$L^2$}{L2}-torsion} For many $L^2$-acyclic groups, Friedl and L\"uck \cite{fl17} have defined an algebraic generalization of $\gr^{(2)}(G)$ called the \emph{universal $L^2$-torsion} $\gt_u^{(2)}(G)$.
We describe this here in a special case.
If $G$ is torsion-free and satisfies the Atiyah conjecture on integrality of $L^2$-Betti numbers, Linnell~\cite{l93} showed there is a certain skew field $D(G)$ containing $\zz G$, and $b_{i}^{(2)}(G) = \dim_{D(G)}H_{i}(G; D(G))$.
Under some additional assumptions on $G$ (satisfied by RAAG's for instance), the universal $L^2$-torsion can be identified with the Reidemeister torsion of $G$ with coefficients in $D(G)$.
This lives in the Whitehead group $\Wh(D(G))$ and determines the usual $L^2$-torsion $\gr^{(2)}(G)$, but contains more information.
For example, Friedl and L\"uck~\cite{fl19} showed that it determines a convex polytope in $H_1(G; \rr)$, and for $\phi \in H^1(G; \rr)$, the thickness of the polytope in the $\phi$-direction is precisely the $L^2$-Euler characteristic of $\ker \phi$.
If $M^3$ is a closed, aspherical $3$-manifold, then it follows from \cite{ll95} and Perelman's proof of Thurston geometrization that $\pi_1(M^3)$ is $L^2$-acyclic; the polytope in this case is essentially dual to the unit ball of the Thurston norm.

If a group $G$ acts cocompactly on a contractible complex with strict fundamental domain $Q$ and the stabilizers are either trivial or $L^2$-acyclic, then $G$ is $L^2$-acyclic if and only if $\partial Q$ is $\qq$-acyclic.
In this case, we can calculate $\gt_u^{(2)}(G)$ and $\gr^{(2)}(G)$.
We refer to Theorem \ref{t:l2torsion1} for the precise statement.
For RAAG's based on a $\qq$-acyclic flag complex $L$, the upshot is that the universal $L^2$-torsion detects two features of the topology of $L$.
The first is the product of the torsion groups $\prod \abs{\tor H_{i}(L)}^{(-1)^i}$, and the second is the (reduced) Euler characteristic of links of vertices of $L$.
This second term could have been predicted by a formula of Davis and the first author \cite{do12} for $L^2$-Betti numbers of the Bestvina--Brady group (the kernel of the standard homomorphism $A_L \rightarrow \zz$ which sends each generator to $1$):
\[
b_{i}^{(2)}(BB_{L}) = \sum_{s \in L^{(0)}} \bar b_{i-1}(\Lk(s); \qq ).
\]

However, the second term does not contribute to the usual $L^{2}$-torsion $\gr^{(2)}(A_{L})$, and combining this with Theorem \ref{t:main} shows that
\[
\sum_{i} (-1)^i t_{i}^{(2)}(A_L) = \gr^{(2)}(A_L),
\]
so the L\"uck torsion approximation conjecture holds for RAAG's.
The proof works for any group satisfying the hypothesis of Theorem \ref{t:main} and some technical assumptions, see Corollary~\ref{c:nz}), though in general there is not as clean a description of the universal $L^2$-torsion.

\subsection*{Disclaimer} For simplicity, we will only work with groups of type $F$.
In particular, in the remainder of the paper, all \emph{groups, subgroups} and \emph{quotients of normal subgroups} will be assumed to be type $F$.
We will say explicitly at certain points when we do not need this strong an assumption.

\subsection*{Notation} A cellular action is called \emph{rigid} if the action of the stabilizer of any cell fixes it.
Most of our theorems start with the assumption that there is a group $G$ which acts cocompactly and rigidly on a contractible complex $Z$ with strict fundamental domain $Q$.
Note that we can always take a barycentric subdivision to ensure that the action is rigid.
The cell stabilizers are denoted by $G^{\gs}$ and, furthermore, $\d Q$ denotes the subcomplex of $Q$ with nontrivial stabilizers.
In these theorems, we will shorten this to ``A group $G$ acts on a contractible complex $Z$ with strict fundamental domain $(Q, \d Q)$".
\begin{Acknowledgements}
    We thank Mikolaj Fraczyk and Damien Gaboriau for reading an earlier version of this article and making valuable comments.
    We thank Grigori Avramidi for many helpful conversations.
    We also thank the anonymous referees whose careful readings and comments have led to substantial improvements in the exposition of this paper.
    The first author would like to thank the Max Planck Institute for Mathematics for its support and excellent working conditions.
    This material is based upon work done while the second author was supported by the National Science Foundation under Award No.~1704364 and under the grant DMS-2203325. 
\end{Acknowledgements}

\section{Rebuilding mapping cylinders}\label{s:rebuilding}

In this section, all spaces will be finite CW-complexes.
We abuse notation to improve readability.
Given a cellular map $f: X \to Y$, we will denote by $f$ the induced map on $i$-chains for any $i$.
We will also not distinguish between various differentials in a chain complex, they will all be denoted by $\partial$ or $\partial_{\text{space}}$ when we want to remember the space.
By the norm of a linear map $\rr^n \to \rr^m$ we always mean the operator norm with respect to the Euclidean norm.
The norm on chains $C_\ast$ will always come from the standard inner product where the cells form an orthonormal basis.

Now, let $X$ be a CW-complex and let $(F, E)$ be a CW-pair.
Given a cellular map $f: E \to X$ we can attach $F$ to the mapping cylinder $M(f)$ to obtain $ \M{f}{F}{E}{X} $.
\begin{Lemma}\label{l:diff}
    The norm of the differential in the chain complex of $ \M{f}{F}{E}{X}$ satisfies
    \[
    \norm{\d} \le 1 + \norm{f} + \norm{\d_{F}} + \norm{\d_{E}} + \norm{\d_{X}}.
    \]
\end{Lemma}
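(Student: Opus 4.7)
The plan is to identify the cellular chain complex of $\M{f}{F}{E}{X}$ directly from its CW structure and then estimate the operator norm of the differential block by block. The mapping cylinder $M(f)$ carries a natural CW structure with cells of two types: the cells of $X$, and, for each cell $e$ of $E$, a prismatic cell $e \times I$ of one higher dimension, whose boundary meets $e$ in the $E \times \{0\}$ face and $f(e)$ in the $E \times \{1\}$ face. After gluing $F$ onto the $E \times \{0\}$ end, I read off an orthogonal direct sum decomposition
\[
C_i\bigl(\M{f}{F}{E}{X}\bigr) \;=\; C_i(F) \,\oplus\, C_{i-1}(E) \,\oplus\, C_i(X),
\]
using the cellular basis as an orthonormal basis, where the middle summand records the prismatic cells $e \times I$ with $\abs{e} = i-1$.

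Next I would compute the differential on each summand. It restricts to $\d_F$ on $C_i(F)$ and to $\d_X$ on $C_i(X)$, and the Leibniz rule for the product cell $e \times I$ with $\abs{e} = i-1$ gives
\[
\d(e \times I) \;=\; (\d_E e) \times I \;+\; (-1)^{i-1}\bigl(f(e) - e\bigr),
\]
where $e$ is viewed as a chain in $F$ via the inclusion $\iota : E \hookrightarrow F$ and $f(e)$ as a chain in $X$. Writing $\d$ as a $3 \times 3$ block operator from $C_i(F) \oplus C_{i-1}(E) \oplus C_i(X)$ to $C_{i-1}(F) \oplus C_{i-2}(E) \oplus C_{i-1}(X)$, the only nonzero blocks are thus $\d_F$, $\d_X$, $\d_E$, $\pm \iota$, and $\mp f$.

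To finish, I would invoke the elementary fact that the operator norm of a linear map between orthogonal direct sum decompositions is bounded by the sum of the operator norms of its blocks (each block factors through $\d$ composed with an orthogonal projection and an isometric inclusion, both contractions), yielding
\[
\norm{\d} \;\le\; \norm{\d_F} + \norm{\d_E} + \norm{\d_X} + \norm{\iota} + \norm{f}.
\]
Because $\iota$ sends distinct cells to distinct cells, $\norm{\iota} = 1$, and the lemma follows. The argument is essentially bookkeeping; the only real traps are keeping the signs and the dimension shift in the prismatic summand straight, but neither affects the final norm estimate, so there is no substantial obstacle to overcome.
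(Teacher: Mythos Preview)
Your proposal is correct and follows essentially the same route as the paper: decompose $C_*(\M{f}{F}{E}{X})$ orthogonally as $C_*(F)\oplus C_{*-1}(E)\oplus C_*(X)$, read off the block matrix of $\d$, and bound $\norm{\d}$ by the sum of the block norms, with the inclusion block contributing the~$1$. One small quibble: your parenthetical justification for the block bound is phrased backwards (writing a block as projection~$\circ\,\d\,\circ$~inclusion gives $\norm{\text{block}}\le\norm{\d}$, not the direction you want); the correct argument is that $\d$ itself is the sum of the maps inclusion~$\circ\,\text{block}\,\circ$~projection, and then the triangle inequality applies.
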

\begin{proof}
    The chain complex of $ \M{f}{F}{E}{X} $ splits orthogonally as
    \[
    C_\ast( \M{f}{F}{E}{X} ) \cong C_\ast(F) \oplus C_{\ast-1}(E) \oplus C_\ast(X).
    \]
    The matrix of the differential $\d$ has corresponding block form:
    \[
    \begin{pmatrix}
        \d_{F} & i & 0 \\
        0 & -\d_{E} & 0 \\
        0 & -f & \d_{X}
    \end{pmatrix}
    \]
    where $i: E \rightarrow F$ is the inclusion map.
    Since the norm of a matrix is bounded by the sum of the norms of its blocks, the claim follows.
\end{proof}

Now suppose we are given homotopy inverses $h: X \to X'$, $h' : X' \to X$, as well as homotopy inverses $g: (F,E) \to (F',E')$ and $g': (F',E') \to (F,E)$.
Let $\gs: X \times I \to X$ be the homotopy between the identity and $h'h$ and $\gg : (F,E) \times I \to (F,E)$ be the homotopy between the identity on $(F,E)$ and $g' g$.

We want to construct a homotopy equivalent version of $\M{f}{F}{E}{X}$ using $X', F'$, and $E'$ and have some control on the norms of the homotopy equivalences and the new boundary operator.
To this purpose, define $f': E' \to X'$ by $f'=h f g'$.
Clearly,
\begin{equation}\label{e:f}
    \norm{f'} \le \norm{h} \norm{f} \norm{g'}.
\end{equation}
\begin{Lemma}\label{i:H}
    There exist homotopy inverses $H: \M{f}{F}{E}{X} \to \M{f'}{F'}{E'}{X'}$ and $H': \M{f'}{F'}{E'}{X'} \to \M{f}{F}{E}{X}$ and a homotopy $\gS$ between $H'H$ and the identity on $\M{f}{F}{E}{X}$ satisfying
    \begin{align*}
        \norm{H} &\le \norm{ g } +\norm{ g_{E} }+\norm{ h } + \norm{h} \norm{f} \norm{\gg}, \\
        \norm{H'} &\le \norm{ g' } +\norm{ g'_{E} }+\norm{ h' } + \norm{\gs} \norm{f} \norm{g'_E}, \\
        \norm{\gS} &\le \norm{ \gg } +\norm{ \gs}+\norm{ \gg_E} + \norm{\gs} \norm{f} \norm{\gg}.
    \end{align*}
\end{Lemma}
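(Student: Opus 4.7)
The plan is to build $H$, $H'$, and $\gS$ by explicit geometric formulas assembled from the given data, then read off the claimed norm bounds from the block decomposition of Lemma \ref{l:diff}.

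First, I would define $H : \M{f}{F}{E}{X} \to \M{f'}{F'}{E'}{X'}$ piecewise: use $g$ on $F$, $h$ on $X$, and split the mapping cylinder $E \times I$ at $t = \tfrac{1}{2}$. On $E \times [0, \tfrac{1}{2}]$ set $H(e, t) = (g_E(e), 2t) \in E' \times I \subset M(f')$, which lands at $f'(g_E(e)) = hfg'g_E(e)$ when $t = \tfrac{1}{2}$. On $E \times [\tfrac{1}{2}, 1]$ set $H(e, t) = hf\gg(e, 2-2t) \in X'$; this matches the first half at $t = \tfrac{1}{2}$ (since $\gg(\cdot, 1) = g'g$) and hits $hf(e)$ at $t = 1$, compatible with the identification $(e, 1) \sim f(e)$ in the source. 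Construct $H'$ symmetrically, using $g'$ on $F'$, $h'$ on $X'$, the formula $(e', t) \mapsto (g'_E(e'), 2t)$ on the first half of $E' \times I$, and $(e', t) \mapsto \gs(fg'(e'), 2t-1) \in X$ on the second half; at $t=1$ the latter equals $h'hfg'(e') = h'f'(e')$, as needed.

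Next, I would subdivide each cylinder cell at $t = \tfrac{1}{2}$ and read off the chain matrix of $H$ with respect to the splitting $C_\ast(\M{f}{F}{E}{X}) \cong C_\ast(F) \oplus C_{\ast-1}(E) \oplus C_\ast(X)$ of Lemma \ref{l:diff}. The result is a block matrix with diagonal entries $g$, $g_E$ (from the first half of the cylinder), and $h$, together with a single off-diagonal block $hf\gg_E : C_{\ast-1}(E) \to C_\ast(X')$ arising from the second half, where $\gg_E$ denotes the prism operator of the homotopy restricted to $E$. Bounding the operator norm by the sum of the block norms, as in the proof of Lemma \ref{l:diff}, gives $\norm{H} \le \norm{g} + \norm{g_E} + \norm{h} + \norm{h}\norm{f}\norm{\gg_E}$, and $\norm{\gg_E} \le \norm{\gg}$ yields the stated estimate. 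The bound for $H'$ is identical, with off-diagonal block $\gs \circ f \circ g'_E$ of norm at most $\norm{\gs}\norm{f}\norm{g'_E}$.

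Finally, I would construct the homotopy $\gS$ between $H'H$ and the identity of $\M{f}{F}{E}{X}$. On $F$ the composite $H'H$ restricts to $g'g$, so $\gg$ provides the homotopy; on $X$ it restricts to $h'h$, so $\gs$ does the job. On the cylinder $E \times I$ these two homotopies must be spliced together through an intermediate piece that rescales $\gg_E$ near $E \times \{0\}$ and composes $\gs$ with $f \circ \gg_E$ near $E \times \{1\}$, so that the boundary values match those chosen on $F$ and $X$. The corresponding chain homotopy decomposes into diagonal blocks $\gg$, $\gg_E$, $\gs$ and a single off-diagonal block $\gs \circ f \circ \gg_E : C_{\ast-1}(E) \to C_{\ast+1}(X)$, yielding $\norm{\gS} \le \norm{\gg} + \norm{\gs} + \norm{\gg_E} + \norm{\gs}\norm{f}\norm{\gg}$ by the same block estimate. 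The main obstacle is exactly this last step: finding the correct parameterization of $\gS$ on $(E \times I) \times I$ so that it restricts to $\gg$ at the $E \times \{0\}$-end and to $\gs \circ f$ at the $E \times \{1\}$-end while producing no cross term beyond $\gs \circ f \circ \gg_E$. Once this is in place, all three norm bounds follow from the same argument used to prove Lemma \ref{l:diff}.
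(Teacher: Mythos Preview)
Your approach is essentially the same as the paper's: the maps $H$ and $H'$ you write down coincide verbatim with the paper's, which factors them as $H=(g\cup h)\circ\Phi$ and $H'=\Psi\circ(g'\cup h')$ through the intermediate mapping cylinders $\M{fg'g}{F}{E}{X}$ and $\M{h'hf}{F}{E}{X}$; this factorization is what the paper uses (via \cite{g08}*{Theorem 4.1.5}) to certify that $H$ and $H'$ are in fact homotopy equivalences, a point you should address. The block matrices and norm estimates you describe match the paper's exactly. For the step you flag as the main obstacle, the paper supplies the explicit parameterization of $\gS$ on $E\times I$ by dividing $[0,1]$ into three $s$-dependent subintervals, using $\gg$ on the first, $\gs\circ f\circ\gg$ on the middle, and $\gs\circ f\circ\gg$ (with a different time reparameterization) on the last; this produces precisely the off-diagonal block $\gs f\gg_E$ you predicted.
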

\begin{proof}
    There is a commutative diagram:
    \[
    \begin{tikzcd}[row sep=large, column sep = large] F \arrow{d}{g} & \arrow[hook']{l}[swap]{i} E \arrow{d}{g_E} \arrow{r}{fg'g} & X \arrow{d}{h}\\
        F' & \arrow[hook']{l}[swap]{i} E' \arrow{r}{f'} & X'
    \end{tikzcd}
    \]
    The maps $g|_E \times \id: E \times I \to E' \times I$, $g: F \to F'$, and $h: X \to X'$ induce a map between mapping cylinders:
    \[
    g \cup h: \M{fg'g}{F}{E}{X} \to \M{f'}{F'}{E'}{X'}.
    \]
    By \cite{g08}*{Theorem 4.1.5}, $g \cup h$ is a homotopy equivalence.
    Since $g'g$ is homotopic to the identity via $\gg$, $ \M{f}{F}{E}{X} $ is homotopy equivalent to $ \M{fg'g}{F}{E}{X} $.
    The homotopy equivalence
    \[
    \Phi: \M{f}{F}{E}{X} \to \M{fg'g}{F}{E}{X}
    \]
    is given by
    \[
    \Phi(x,t) =
    \begin{cases}
        (x,2t) & 0 \le t \le 1/2 ,\\
        f(\gg(x,2(1-t))) & 1/2 \le t \le 1.
    \end{cases}
    \]
    on the $E \times I$, and the identity maps on $F$ and $X$.

    The composition of these homotopy equivalences gives a homotopy equivalence
    \[
    H: \M{f}{F}{E}{X} \xrightarrow{\Phi} \M{fg'g}{F}{E}{X} \xrightarrow{g \cup h} \M{f'}{F'}{E'}{X'} .
    \]
    To estimate its norm, we use splittings of the chain complexes of $ \M{f'}{F'}{E'}{X'} $ and $ \M{fg'g}{F}{E}{X} $, similar to Lemma~\ref{l:diff}.
    The map on chains induced by $g \cup h$ is block diagonal $g \cup h = g \oplus g_{E} \oplus h$.
    The matrix of $\Phi$ has block form:
    \[
    \begin{pmatrix}
        id & 0 & 0 \\
        0 & id & 0 \\
        0 & -f \gg & id
    \end{pmatrix}
    \]
    Thus, $H=(g \cup h) \Phi $ has matrix:
    \[
    \begin{pmatrix}
        g & 0 & 0 \\
        0 & g_{E} & 0 \\
        0 & -hf \gg & h
    \end{pmatrix}
    \]
    and the claim follows.

    The proof for $H'$ is similar.
    Note that there is another commutative diagram:
    \[
    \begin{tikzcd}[row sep=large, column sep = large] F' \arrow{d}{g'} & \arrow[hook']{l}[swap]{i} E' \arrow{d}{g'_E} \arrow{r}{f'} & X' \arrow{d}{h'}\\
        F & \arrow[hook']{l}[swap]{i} E \arrow{r}{h'hf} & X
    \end{tikzcd}
    \]
    Therefore, the induced map $g' \cup h': \M{f'}{F'}{E'}{X'} \to \M{h'hf}{F}{E}{X}$ is a homotopy equivalence.
    The mapping cylinders $\M{h'hf}{F}{E}{X}$ and $\M{f}{F}{E}{X}$ are homotopy equivalent through the explicit homotopy
    \[
    \Psi(x,t) =
    \begin{cases}
        (x,2t) & 0 \le t \le 1/2, \\
        \gs(f(x), 2t-1) & 1/2 \le t \le 1.
    \end{cases}
    \]
    on $E \times I$ and the identity on $F$ and $X$.
    Thus, the matrix for $H' = \Psi \circ (g' \cup h')$ is
    \[
    \begin{pmatrix}
        id & 0 & 0 \\
        0 & id & 0 \\
        0 & \gs f & id
    \end{pmatrix}
    \begin{pmatrix}
        g' & 0 & 0 \\
        0 & g'_{E} & 0 \\
        0 & 0 & h'
    \end{pmatrix}
    =
    \begin{pmatrix}
        g' & 0 & 0 \\
        0 & g'_{E} & 0 \\
        0 & \gs f g'_E & h'
    \end{pmatrix}
    \]

    There are explicit though somewhat horrendous formulas for the composition $H'H$ and the homotopy $\Sigma$.
    On the other hand, we only need to estimate the norm of the induced chain homotopy, so at this point will just observe that the matrix
    \[
    \begin{pmatrix}
        \gg & 0 & 0 \\
        0 & -\gg_E & 0 \\
        0 & -\gs f \gg_E & \gs
    \end{pmatrix}
    \]
    gives a chain homotopy $\gS$ between $H'H$ and the identity (which the reader can easily verify).
\end{proof}
\begin{Remark}
    We shall refer to the starting maps $g$, $g'$, $h$, $h'$ and $\gs$ as the \emph{rebuilding maps}, and $H$, $H'$, $\gS$ as the \emph{rebuilt maps}.
    We will often be inductively going through this rebuilding procedure, in which case the rebuilt maps will become the next stage's rebuilding maps (along with new maps $g$ and $g'$ from a new pair $(F,E)$.)
\end{Remark}

\subsection{Filtered Complexes}

Let $X$ be a CW-complex.
A \emph{cylindrical filtration} of $X$ consists of a collection of increasing subcomplexes $\{X_{j}\}_{j = 0}^n$ with $X_{n} = X$ and CW-pairs $\{(F_{j}, E_{j}) \}_{j= 1}^n$ with maps $f_{j}: E_{j} \to X_{j-1}$ for $j > 0$ so that
\[
F_0 = X_0 \text{ and }X_{j} = \M{f_{j}}{F_{j}}{E_{j}}{X_{j-1}}.
\]

Now, suppose that we have $X$ with a cylindrical filtration of length $n$, and a collection of pairs $\{(F_{j}',E_{j}')\}_{j = 0}^n$ homotopy equivalent to the $\{(F_{j},E_{j})\}_{j = 0}^n$.
Then by applying Lemma \ref{i:H} repeatedly, we build a homotopic complex $X'$ with a cylindrical filtration, and homotopy inverses $H: X \to X'$, $H': X' \to X$, and a homotopy $\gS: X \times I \to X$ between $H'H$ and the identity as rebuilt maps.
Let $g_{j}: (F_{j}, E_{j}) \to (F_{j}', E_{j}')$, $g_{j}': (F_{j}, E_{j}') \to (F_{j}, E_{j})$ and $\gg_{j}: (F_{j}, E_{j}) \times I \to (F_{j}, E_{j})$ be the relevant starting data for our rebuilding.
The next two lemmas are our main source of control for norms of the rebuilt maps in terms of the rebuilding maps.

Let $M$ be an upper bound for the norms of the attaching maps $f_{j}$ in the initial filtration.
Suppose the norms of rebuilding maps $\norm{g_{j}}$, $\norm{g_{j}'}$, $\norm{\partial_{F'}}$ and $\norm{\gg_{j}}$ are all bounded above by $K$.
Inductive application of Lemmas \ref{l:diff} and \ref{i:H}, and equation (\ref{e:f}) gives a polynomial in $M$ and $K$ which bounds $\norm{H}, \norm{H'}, \norm{\gS}$ and $\norm{\d_{X'}}$.
Regarding it as a polynomial in $K$ gives the following:
\begin{Lemma}\label{l:control}
    There is a constant $C=C(M, n)$ such that if
    \[
    \norm{g_{j}}, \norm{g_{j}'}, \norm{\gg_{j}}, \norm{\partial_{F_{j}'}} < K
    \]
    and $K > 2$, then
    \[
    \norm{H}, \norm{H'}, \norm{\gS}, \norm{\partial_{X'}} < CK^C.
    \]
\end{Lemma}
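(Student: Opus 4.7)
The plan is to prove Lemma~\ref{l:control} by induction on the filtration length $n$, applying Lemma~\ref{i:H}, Lemma~\ref{l:diff}, and equation~(\ref{e:f}) at each stage. Write $H_i : X_i \to X'_i$, $H'_i : X'_i \to X_i$, and $\gS_i$ for the rebuilt maps and homotopy produced after processing the first $i$ levels of the filtration; these then serve as $h$, $h'$, $\gs$ when rebuilding at level $i+1$ with fresh pair data $g_{i+1}$, $g'_{i+1}$, $\gg_{i+1}$. The base case $i = 0$ sets $H_0 = g_0$, $H'_0 = g'_0$, $\gS_0 = \gg_0$, and $\d_{X'_0} = \d_{F'_0}$, each of norm at most $K$ by hypothesis.

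For the inductive step, assume $\norm{H_{i-1}}$, $\norm{H'_{i-1}}$, $\norm{\gS_{i-1}}$, and $\norm{\d_{X'_{i-1}}}$ are all bounded by a polynomial $P_{i-1}(K)$ whose degree and coefficients depend only on $M$ and $i$. Equation~(\ref{e:f}) gives $\norm{f'_i} \le \norm{H_{i-1}} \cdot \norm{f_i} \cdot \norm{g'_i} \le M K P_{i-1}(K)$, since $\norm{f_i} \le M$. Feeding $h = H_{i-1}$, $h' = H'_{i-1}$, $\gs = \gS_{i-1}$ together with the fresh data into Lemma~\ref{i:H} (and using $\norm{g_{i,E}} \le \norm{g_i} \le K$, since $E_i$ is a subcomplex of $F_i$, and similarly for $g'_{i,E}$ and $\gg_{i,E}$) bounds $\norm{H_i}$, $\norm{H'_i}$, $\norm{\gS_i}$ by sums of terms of the form $K$, $P_{i-1}(K)$, and $M K P_{i-1}(K)$. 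For the new differential, $\norm{\d_{E'_i}} \le \norm{\d_{F'_i}} \le K$, so Lemma~\ref{l:diff} yields $\norm{\d_{X'_i}} \le 1 + M K P_{i-1}(K) + 2K + P_{i-1}(K)$. All four quantities are therefore dominated by a new polynomial $P_i(K)$ whose degree exceeds that of $P_{i-1}$ by at most one, and whose coefficients still depend only on $M$ and $i$.

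After $n$ iterations, $P_n$ is a polynomial in $K$ of degree bounded by a function of $n$, with coefficients bounded in terms of $M$ and $n$. The hypothesis $K > 2$ then absorbs additive constants and lower-order terms, so a single constant $C = C(M,n)$ dominates simultaneously both the degree and the leading coefficient, giving $\norm{H}, \norm{H'}, \norm{\gS}, \norm{\d_{X'}} < CK^C$ as claimed.

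The proof contains no conceptual obstacle; the only real work is bookkeeping. The mildly subtle point is choosing $C$ large enough to serve at once as coefficient, exponent, and a bound on the number of summands produced by each application of Lemmas~\ref{l:diff} and~\ref{i:H}. A clean way to handle this is to show by induction that $\max\{\norm{H_i}, \norm{H'_i}, \norm{\gS_i}, \norm{\d_{X'_i}}\} \le C_i K^{C_i}$ for constants $C_i = C_i(M, n)$, where the recursion $C_i \le C_{i-1} + D(M)$ for some fixed $D(M)$ follows from the estimates above and the assumption $K > 2$; taking $C = C_n$ concludes the argument.
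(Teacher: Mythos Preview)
Your proposal is correct and follows essentially the same approach as the paper: the paper's ``proof'' is just the sentence immediately preceding the lemma, which says exactly that inductive application of Lemmas~\ref{l:diff} and~\ref{i:H} and equation~(\ref{e:f}) yields a polynomial bound in $M$ and $K$, and regarding it as a polynomial in $K$ gives the result. You have simply written out that induction in detail, including the observation that restriction to subcomplexes (for $g_{i,E}$, $\gg_{i,E}$, $\d_{E'_i}$) does not increase norms, and that each step raises the degree in $K$ by at most one.
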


Now let $\widehat X \to X$ be a cover of $X$.
The cylindrical filtration on $X$ induces a natural cylindrical filtration on $\widehat X$, where $\widehat X_j$ and the pair $(\widehat F_{j}, \widehat E_{j})$ are the preimages of $X_j$ and $(F_{j}, E_{j})$, and $\widehat f_{j}$ is the lift of $f_{j}$.
\begin{Lemma}\label{l:cover}
    The norm of $\widehat f_{j}$ is uniformly bounded, independent of the cover.
\end{Lemma}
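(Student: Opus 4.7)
The plan is to write $\widehat{f_i}$ as a matrix in the cellular basis of the chain groups and to control its $\ell^1 \to \ell^1$ and $\ell^\infty \to \ell^\infty$ norms by those of $f_i$; a standard interpolation inequality then furnishes the bound on the operator norm.

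First I would unpack the matrix of $\widehat{f_i}$. Since $\widehat{X} \to X$ is a cover and $\widehat{f_i}$ is the unique lift of $f_i$, each cell $\tilde e$ of $\widehat{E_i}$ projects to a unique cell $e$ of $E_i$, and if $f_i(e) = \sum_c n_{c,e}\, c$ in $C_\ast(X_{i-1})$, then
\[
\widehat{f_i}(\tilde e) \;=\; \sum_c n_{c,e}\,\tilde c(\tilde e),
\]
where $\tilde c(\tilde e)$ denotes the specific lift of $c$ in $\widehat{X_{i-1}}$ determined by $\tilde e$ via path-lifting. In particular the nonzero coefficients appearing in the column of $\widehat{f_i}$ indexed by $\tilde e$ are exactly the integers $n_{c,e}$.

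Next I would verify the two sum-bounds. The column sum at $\tilde e$ is
\[
\sum_{\tilde c} \bigl|\langle \widehat{f_i}(\tilde e),\tilde c\rangle\bigr| \;=\; \sum_c |n_{c,e}|,
\]
which equals the column sum at $e$ for $f_i$. For the row sum at a fixed $\tilde c$ lying over $c$, note that for each cell $e$ with $n_{c,e} \neq 0$, the lifts $\tilde e$ satisfying $\tilde c(\tilde e) = \tilde c$ correspond to deck transformations of $\tilde e_0$ (for a chosen base lift $\tilde e_0$) that fix $\tilde c$; since the deck group of a regular cover acts freely on cells, there is exactly one such $\tilde e$. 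Hence the row sum at $\tilde c$ equals the row sum at $c$ for $f_i$. So both $\|\widehat{f_i}\|_{1 \to 1}$ and $\|\widehat{f_i}\|_{\infty \to \infty}$ are bounded by the corresponding norms of $f_i$.

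Finally I would invoke the interpolation bound
\[
\|A\|_{2 \to 2} \;\le\; \sqrt{\|A\|_{1 \to 1}\cdot\|A\|_{\infty \to \infty}},
\]
applied to $A = \widehat{f_i}$, to conclude that $\|\widehat{f_i}\|$ is bounded by $\sqrt{\|f_i\|_{1 \to 1}\cdot \|f_i\|_{\infty \to \infty}}$, a quantity that depends only on $f_i$ and not on the cover. The only step requiring any care is the row-sum computation, which requires the freeness of the deck action on cells; this is automatic for covers coming from normal subgroups of a group acting freely on a CW complex, which is the setting we will apply the lemma in.
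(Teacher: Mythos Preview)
Your interpolation/Schur-test strategy is the right idea, and it is essentially what the cited Lemma~2.5 of L\"uck does. But the key formula
\[
\widehat{f_i}(\tilde e)=\sum_c n_{c,e}\,\tilde c(\tilde e)
\]
is not correct in general, and both your column-sum and row-sum claims rest on it. The coefficient $n_{c,e}$ of $c$ in $f_i(e)$ is a degree, and the corresponding local degrees over the various lifts of $c$ need not concentrate on a single lift; they can even cancel downstairs while remaining nonzero upstairs. A clean counterexample: take $X_{i-1}=S^1\vee S^1$ with $1$-cells $a,b$, let $E_i=S^1$, and let $f_i$ trace the commutator word $aba^{-1}b^{-1}$. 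Then $f_i=0$ on $C_1$, so every $n_{c,e}=0$, yet in the cover of $M(f_i)$ corresponding to $F_2\to(\zz/2)^2$ one computes
\[
\widehat f_i(\tilde e)=\tilde a_{(0,0)}+\tilde b_{(1,0)}-\tilde a_{(0,1)}-\tilde b_{(0,0)},
\]
which has column sum $4$, not $0$. Row sums fail the same way.

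The fix is exactly what the paper does: pass to the universal cover first. There $f_i$ is described by a matrix $A$ with entries in $\zz\pi_1(X)$, and for any intermediate regular cover the matrix of $\widehat f_i$ is obtained by pushing the entries of $A$ forward to $\zz[\pi_1(X)/\Gamma]$. Triangle inequality then bounds the column and row sums of $\widehat f_i$ by the corresponding $\ell^1$-sums of the entries of $A$, which are finite and independent of the cover; your interpolation inequality then finishes the job. So your argument is salvageable, but the reference object has to be the $\zz\pi_1$-matrix, not the downstairs map $f_i$.
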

\begin{proof}
    On the level of the universal cover the attaching map is described by a matrix with $\zz\pi_{1}$ coefficients.
    The uniform bound for $\norm{\widehat f_{j}}$ in terms of this matrix is given by \cite{l94}*{Lemma 2.5}.
\end{proof}

To summarize, given a cylindrical filtration on $X$, the norm of the rebuilt maps of any rebuilding of a finite cover of $X$ is bounded by a fixed polynomial of the maximal norm of the rebuilding maps.

Any regular neighborhood of a subcomplex is a mapping cylinder neighborhood as observed in \cite{kr63}.
Therefore, any filtration of $X$ by subcomplexes leads to a cylindrical filtration (on the second barycentric subdivision of $X$) by taking pairs $(F_{j}, E_{j})$ to be $(X_{j} -N(X_{j-1}), \d N(X_{j-1}))$, where $N(X_{j-1})$ is a regular neighborhood of $X_{j-1}$ in $X_{j}$.
A particularly easy case is a filtration of $X$ by its skeleta $X^{j}$, then we can take $(F_{j}, E_{j})$ to be $\sqcup (\gs^{j}, \d \gs^{j})$ with the standard attaching maps.
It is easy to change such filtration into a cylindrical filtration and keep the same pairs $(F_{j},E_{j})$ at the cost of changing the attaching maps.
\begin{Lemma}\label{l:filter}
    Suppose $X$ is filtered by subcomplexes $X_0 \subset X_1 \subset \dots \subset X_{n} = X$ where $X_{j} = F_{j} \cup_{f_{j}} X_{j-1}$ for a pair $(F_{j}, E_{j})$ and a map $f_{j}: E_{j} \to X_{j-1}$.
    Then there is a homotopy equivalent complex $X'$ with a cylindrical filtration $X_0' \subset X_1' \subset \dots \subset X_{n}' = X$ and maps $f_{j}': E_{j} \to X_{j-1}'$ so that $X_{j}' = \M{f_{j}'}{F_{j}}{E_{j}}{X_{j-1}'}$.
\end{Lemma}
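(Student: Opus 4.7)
The plan is to induct on the length $n$ of the filtration. The base case $n=0$ is immediate: take $X_0' := X_0$, with its trivial cylindrical filtration. For the inductive step, I will assume we have produced a CW-complex $X_{i-1}'$ with a cylindrical filtration of length $i-1$, together with cellular homotopy inverses $h_{i-1}: X_{i-1}' \to X_{i-1}$ and $h_{i-1}': X_{i-1} \to X_{i-1}'$ and a cellular homotopy $\gs_{i-1}$ from $h_{i-1} h_{i-1}'$ to $\id_{X_{i-1}}$. The new attaching map will be $f_i' := h_{i-1}' \circ f_i: E_i \to X_{i-1}'$ (after cellular approximation), and I will set $X_i' := \M{f_i'}{F_i}{E_i}{X_{i-1}'}$. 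By construction, this extends the cylindrical filtration by one stage while keeping the same pair $(F_i,E_i)$.

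To close the induction I still need a homotopy equivalence $X_i' \simeq X_i$. The guiding observation is that the mapping cylinder $M(f_i')$ deformation retracts onto $X_{i-1}'$, so $X_i' = F_i \cup_{E_i} M(f_i')$ deformation retracts onto the adjunction space $F_i \cup_{f_i'} X_{i-1}' = F_i \cup_{h_{i-1}' f_i} X_{i-1}'$. Comparing with $X_i = F_i \cup_{f_i} X_{i-1}$, I will define an explicit map $H_i: X_i' \to X_i$ by: the identity on $F_i$; the map $h_{i-1}$ on $X_{i-1}' \subset M(f_i')$; and on the cylinder $E_i \times I \subset M(f_i')$ by the formula $(e,t) \mapsto \gs_{i-1}(f_i(e), t)$. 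The endpoint check is exactly the identification built into the double mapping cylinder: at $t=0$ the formula gives $h_{i-1} h_{i-1}' f_i(e) = h_{i-1} f_i'(e)$, which matches $h_{i-1}$ on $X_{i-1}'$, and at $t=1$ it gives $f_i(e)$, which matches the identity on $F_i$. A symmetric construction, using $h_{i-1}'$ and a homotopy from $h_{i-1}' h_{i-1}$ to the identity, yields $H_i': X_i \to X_i'$.

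What remains is to verify $H_i$ and $H_i'$ are genuine homotopy inverses; this follows because $(F_i,E_i)$ is a CW-pair and hence a cofibration, so the gluing lemma for homotopy pushouts applies. The only real subtlety is bookkeeping: each choice in the construction (cellular approximation of $f_i'$, cellularity of $\gs_{i-1}$ and the rebuilt maps) must be made so that $X'$ is actually a CW-complex with a cylindrical filtration and so that all maps are cellular, as required by the chain-level statements in Section \ref{s:rebuilding}. I do not anticipate a genuine obstacle here; the lemma is, in effect, a controlled way to replace each attachment $F_i \cup_{f_i} X_{i-1}$ by its fattened double mapping cylinder, carrying the homotopy equivalence one stage at a time via the data $\gs_{i-1}$.
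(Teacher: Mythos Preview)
Your proposal is correct and follows essentially the same inductive approach as the paper: assume a homotopy equivalence $X_{i-1}\simeq X_{i-1}'$, set $f_i'=h_{i-1}'\circ f_i$, and define $X_i'=\M{f_i'}{F_i}{E_i}{X_{i-1}'}$. The paper's proof is a terse two-line version of this; you supply the explicit homotopy equivalence $H_i$ and its inverse (which the paper leaves implicit, effectively deferring to Lemma~\ref{i:H} with $g=g'=\id$), so your added detail is welcome but not a different route.

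One small wording issue: the claim that $F_i\cup_{E_i} M(f_i')$ \emph{deformation retracts} onto $F_i\cup_{f_i'} X_{i-1}'$ is not literally true (the top $E_i$ is pinned by $F_i$); the collapse map is a homotopy equivalence because $(F_i,E_i)$ is a cofibration, which is exactly what you invoke later via the gluing lemma. Since you do not actually use the deformation-retract claim, this is only a matter of phrasing.
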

\begin{proof}
    Suppose by induction that $h:X_{j-1} \to X_{j-1}'$ is a homotopy equivalence and $X_{j-1}'$ has a cylindrical filtration.
    Set $X_{j}' = \M{hf_{j}}{F_{j}}{E_{j}}{X_{j-1}'}$.
    Then $X_{j}'$ is homotopy equivalent to $X_{j}$ by Lemma \ref{i:H}.
\end{proof}

This applies to the natural filtration on a Borel construction.
Suppose $G$ acts cocompactly and rigidly on a complex $Z$.
Then $X=EG \times_{G} Z$ has a natural projection $\pi: X \to Z/G$, and therefore $X$ has a filtration by the preimages of skeleta of $Z/G$.
This filtration is an iterated adjunction space where
\[
(F_{j}, E_{j}) = \bigsqcup_{\gs \in Z^{(j)}/G} EG/G^{\gs} \times (\gs, \partial\gs),
\]
and $G^{\gs}$ denotes the stabilizer of a lift of $\gs$ to $Z$.

By Lemma~\ref{l:filter} we can replace this filtration by a cylindrical filtration and keep the same pairs $(F_{j},E_{j})$.
Furthermore, since $EG/G^{\gs}$ is a model of the classifying space of $BG^{\gs}$, we can rebuild this filtration using any other collection of models $\{ BG^{\gs}\}_{\gs \in Z/G}$.
Adding finiteness assumptions for stabilizers gives the following lemma.
\begin{Lemma}\label{l:borel}
    Suppose that $G$ acts cocompactly and rigidly on a contractible complex $Z$.
    Suppose that all stabilizers have finite $BG^\gs$.
    Then there is a finite $BG$ with a cylindrical filtration where
    \[
    (F_{j}, E_{j}) = \bigsqcup_{\gs \in Z^{(j)}/G} BG^\gs \times (\gs, \partial \gs).
    \]
\end{Lemma}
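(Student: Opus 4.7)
The plan is to run exactly the construction sketched in the paragraph preceding the lemma, making explicit where rigidity and the finiteness hypothesis enter, and then to finish with the rebuilding machinery already developed.

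First I would start from the Borel construction $X=EG\times_{G}Z$, which is a model for $BG$ because $Z$ is contractible. Since the action is cocompact, the quotient $Z/G$ is a finite CW-complex with finitely many cells in each dimension, and the projection $\pi:X\to Z/G$ gives a finite filtration of $X$ by the preimages of skeleta, $X_i=\pi^{-1}((Z/G)^{(i)})$. Rigidity of the action is what makes this filtration nice on the nose: for a cell $\gs\subset Z$ of dimension $i$ the $G$-orbit of $\gs$ is $G\times_{G^{\gs}}\gs$, and since $G^{\gs}$ fixes $\gs$ pointwise,
\[
EG\times_{G}(G\cdot\gs)\ \cong\ EG\times_{G^{\gs}}\gs\ \cong\ EG/G^{\gs}\times\gs.
\]
This identifies $X_i$ as an adjunction space built from $X_{i-1}$ by gluing in the disjoint union of $EG/G^{\gs}\times\gs$ for $\gs\in Z^{(i)}/G$ along $EG/G^{\gs}\times\d\gs$, via the attaching maps already present in the Borel construction. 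So we have precisely an iterated adjunction space with pairs
\[
(F_i,E_i)\ =\ \bigsqcup_{\gs\in Z^{(i)}/G}EG/G^{\gs}\times(\gs,\d\gs).
\]

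Second, I would apply Lemma \ref{l:filter} to turn this iterated adjunction space into a homotopy equivalent cylindrical filtration using the same pairs $(F_i,E_i)$ (only the attaching maps change). At this stage the complex has the correct cylindrical form but is not finite, because $EG/G^{\gs}$ is generally infinite dimensional.

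Finally, I would feed this into the rebuilding procedure of Lemma \ref{i:H}, proceeding skeleton by skeleton up the filtration. By hypothesis, for each orbit representative $\gs$ there is a finite model $BG^{\gs}$ together with mutually inverse homotopy equivalences $EG/G^{\gs}\leftrightarrows BG^{\gs}$, which induce homotopy equivalences of pairs
\[
EG/G^{\gs}\times(\gs,\d\gs)\ \simeq\ BG^{\gs}\times(\gs,\d\gs).
\]
Taking disjoint unions over $\gs\in Z^{(i)}/G$ supplies the rebuilding data $g_i,g_i',\gg_i$ in each dimension. Inductively applying Lemma \ref{i:H} up the cylindrical filtration produces a new cylindrical filtration with the prescribed pairs
\[
(F_i,E_i)\ =\ \bigsqcup_{\gs\in Z^{(i)}/G}BG^{\gs}\times(\gs,\d\gs),
\]
each of which is now a finite CW-pair, and whose total space is homotopy equivalent to $X\simeq BG$, hence is itself a finite $BG$.

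The only real subtlety — not an obstacle so much as a bookkeeping point — is checking that at each inductive step the new attaching map $f_i'$ lands in the already rebuilt $X_{i-1}'$ (which is guaranteed by the construction in Lemma \ref{i:H}, where $f_i'=hf_ig_i'$) and that rigidity is genuinely needed so that the preimage of a single cell of $Z/G$ really is a product $EG/G^{\gs}\times\gs$ rather than a twisted bundle; both are already built into the lemmas above, so the argument reduces to assembling them in the indicated order.
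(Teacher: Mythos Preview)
Your proposal is correct and follows exactly the argument the paper gives in the paragraph immediately preceding the lemma: Borel construction, filter by preimages of skeleta to get an iterated adjunction space with pairs $EG/G^{\gs}\times(\gs,\d\gs)$, convert to a cylindrical filtration via Lemma~\ref{l:filter}, then rebuild with finite models $BG^{\gs}$. If anything you are more careful than the paper, which simply asserts the lemma after that paragraph without a separate proof.
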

\begin{remark}
    Similar results to those above could have been obtained using the effective rebuilding procedure of Abert, Bergeron, Fraczyk, and Gaboriau in \cite{abfg21}*{Section 4 and 5}.
    They did not use mapping cylinders, but rather stacks of CW-complexes as in \cite{g08}*{Chapter 6}.
    We included this section to make this paper more self-contained and because we think the mapping cylinder approach is simpler.
\end{remark}

\section{Torsion bounds}\label{s:torsionlemma}

In the next section we will use a long exact sequence in our computation of homology torsion growth.
We will need bounds for torsion in the short exact sequences that come from this.
The following two lemmas will be useful.
\begin{lemma}\label{l:logtor}
    Let $0 \to A \xrightarrow{i} B \xrightarrow{j} C \to 0$ be a short exact sequence of finitely generated abelian groups.
    \begin{enumeratei}
        \item\label{i:subadd}
        $\logtor B \le \logtor A + \logtor C$.
        \item\label{i:add}
        If $A$ is finite, then $\logtor B = \logtor A + \logtor C$.
        \item\label{i:snf}
        If $A$ and $B$ are both free abelian, then $\logtor C \leq \rk A \log e(C)$, where $e(C)$ denotes the maximal order of torsion elements in $C$ (the exponent of $\tor C$.)
        \item\label{i:lb}
        In general, $\logtor B \geq \logtor A + \logtor C - \rk A \log e(C)$.
    \end{enumeratei}
\end{lemma}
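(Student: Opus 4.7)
My plan is to prove the four parts in the stated order, with each later part reusing the earlier ones. The underlying observation is that
\[
	0 \to \tor A \xrightarrow{i} \tor B \xrightarrow{j} \tor C
\]
is always exact (though it may fail to be right exact): $i$ restricts to torsion because injective maps preserve orders, and $\ker(j|_{\tor B}) = i(\tor A)$ since the preimage in $B$ of $0 \in C$ is $i(A)$, and a torsion element of $B$ lying in $i(A)$ must come from $\tor A$.

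Part (\ref{i:subadd}) is then immediate from the injection $\tor B / i(\tor A) \hookrightarrow \tor C$. For (\ref{i:add}), I would upgrade this to right exactness when $A$ is finite: given $c \in \tor C$ with $nc = 0$, any lift $b \in B$ satisfies $nb \in i(A)$, which is finite, so $b$ itself has finite order. This gives a short exact sequence $0 \to \tor A \to \tor B \to \tor C \to 0$ and hence the equality. For (\ref{i:snf}), I would apply Smith normal form to the inclusion $A \hookrightarrow B$ of free abelian groups: in suitable bases it is diagonal with entries $d_1 \mid \cdots \mid d_{\rk A}$, yielding $\tor C \cong \bigoplus_j \zz/d_j\zz$, so $e(C) = d_{\rk A}$ and $\abs{\tor C} = \prod_j d_j \le e(C)^{\rk A}$.

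Part (\ref{i:lb}) is the main step. My plan is to treat the torsion and free parts of $A$ separately, using (\ref{i:add}) to peel off $\tor A$ and (\ref{i:snf}) to control the rest. Set $T = \tor A$ and $\bar B = B / i(T)$. Applying (\ref{i:add}) to $0 \to T \to B \to \bar B \to 0$ gives $\logtor B = \logtor A + \logtor \bar B$, while quotienting the original sequence by $T$ yields $0 \to A/T \to \bar B \to C \to 0$ with $A/T$ free of rank $\rk A$. The composition $A/T \hookrightarrow \bar B \twoheadrightarrow \bar B / \tor \bar B$ is still injective since $A/T$ is torsion-free, so (\ref{i:snf}) applied to this inclusion of free abelian groups bounds the torsion of its cokernel $Q$ by $\rk A \log e(Q)$. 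A quick diagram chase identifies $Q$ with $C / \tor \bar B$, whose torsion is $\tor C / \tor \bar B$, so $e(Q) \le e(C)$. Finally, (\ref{i:subadd}) applied to $0 \to \tor \bar B \to C \to Q \to 0$ gives $\logtor \bar B = \logtor \tor \bar B \ge \logtor C - \logtor Q \ge \logtor C - \rk A \log e(C)$, and summing the two contributions yields (\ref{i:lb}).

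I expect (\ref{i:lb}) to be the main obstacle, since it requires carefully tracking how the torsion of $B$ splits between ``old torsion inherited from $A$'' and ``new torsion created by including the free part of $A$,'' and ensuring that the exponent bound from (\ref{i:snf}) survives the passage through the three intermediate exact sequences.
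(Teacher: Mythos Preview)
Your proof is correct and follows essentially the same approach as the paper. Parts (\ref{i:subadd})--(\ref{i:snf}) are identical to the paper's arguments. For (\ref{i:lb}), the paper organizes the same ingredients into a single $3\times 3$ diagram (rows $\tor A \to \tor B \to j(\tor B)$, $A\to B\to C$, $A/\tor A \to B/\tor B \to C/j(\tor B)$) and applies (\ref{i:add}) to the top row and right column and (\ref{i:snf}) to the bottom row; your sequential reduction via $\bar B = B/i(\tor A)$ is just an unwinding of this diagram, since $\bar B/\tor\bar B = B/\tor B$ and your $Q$ coincides with the paper's $C/j(\tor B)$.
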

\begin{proof}
    Subadditivity of $\logtor$ (\ref{i:subadd}) is well known and follows immediately from the left exact induced sequence of torsion subgroups.
    If $A$ is finite, then the induced sequence is exact, and we obtain (\ref{i:add}).
    To prove (\ref{i:snf}) choose bases for $A$ and $B$ so that the matrix of $i$ is diagonal.
    Then $\logtor C$ is the sum of at most $\rk A$ logs of nonzero entries, each of them bounded by $\log e(C) $.

    Finally, to prove (\ref{i:lb}) consider the following diagram with short exact rows and columns:
    \[
    \begin{tikzcd}[row sep=small, column sep = small] \tor A \ar{d} \ar{r}{i} & \tor B \ar{d} \ar{r}{j} & j(\tor B) \ar{d} \\
        A \ar{d} \ar{r}{i} & B \ar{d} \ar{r}{j} & C \ar{d} \\
        A/\tor A \ar{r}{i} & B/\tor B \ar{r}{j} & C/j(\tor B)
    \end{tikzcd}
    \]
    We can apply (\ref{i:add}) to the top row and to the right column to conclude that $$\logtor B = \logtor A + \logtor C - \logtor C/j(\tor B).$$ Then applying (\ref{i:snf}) to the bottom row implies that
    \[
    \logtor C/j(\tor B) \leq \rk A\log e(C/j(\tor B)),
    \]
    and since $j(\tor B)$ is torsion we have $e(C/j(\tor B)) \leq e(C)$.
    Combining these gives (\ref{i:lb}).
\end{proof}
\begin{lemma}\label{l:logtor1}
    Let $A \xrightarrow{f} B \xrightarrow{} C \xrightarrow{} D \xrightarrow{} E $ be an exact sequence of finitely generated abelian groups.
    Then
    \[
    0 \leq \logtor B/\im f - \logtor C + \logtor D \leq \logtor E + \rk B \log e(D).
    \]
\end{lemma}
\begin{proof}
    Denote the kernels $C':=\ker (C \to D)$ and $D' :=\ker (D \to E)$.
    We have two short exact sequences:
    \begin{gather*}
        0 \to C' \to C \to D' \to 0, \\
        0 \to D' \to D \to E' \to 0.
    \end{gather*}
    Applying Lemma \ref{l:logtor}\eqref{i:subadd} and \eqref{i:lb} to the first sequence gives
    \[
    \logtor C' + \logtor D' - \rk C' \log e(D') \leq \logtor C \leq \logtor C' + \logtor D'.
    \]

    Monotonicity of $\rk$ under surjections and of $e$ under injections implies $\rk C' \leq \rk B$ and $\log e(D') \leq \log e(D)$, so rearranging,
    \[
    -\logtor D' \leq \logtor C' - \logtor C \leq - \logtor D' + \rk B \log e(D).
    \]
    From the second sequence, using Lemma \ref{l:logtor}\eqref{i:subadd} and monotonicity of $\logtor$ under injections, we obtain
    \[
    \logtor D' \leq \logtor D \leq \logtor D' + \logtor E.
    \]
    Adding these inequalities and noting that $C'= B/\im f$ proves the claim.
\end{proof}

We shall also need the following proposition of Gabber, see \cite{abfg21}*{Proposition 9.1} for a detailed proof.
\begin{proposition}\label{p:gabber}
    Suppose that $f: \zz^n \rightarrow \zz^k$ is a $\zz$-linear map.
    Then
    \[
    \logtor \coker f \leq \rk f \log \max(\norm f , 1).
    \]
    In particular, for a finite CW-complex $X$
    \[
    \logtor H_j(X) \leq \rk C_j(X) \times \log \max(\norm{\d_{j+1} } , 1).
    \]
\end{proposition}

\section{Examples}\label{s:examples}

Our main computations of homological growth will be for groups acting on contractible complexes with strict fundamental domain.
Such actions have an equivalent description in terms of simple complexes of groups.
We review a general procedure for constructing such groups and complexes and two specific examples, see \cite{bh99}*{Chapter II.12} for full details.

A \emph{simple complex of groups} over a poset $\cP$ is a collection of groups $\{G^\gs\}_{\gs \in \cP}$ and monomorphisms $\phi_{\gs\gt}: G^\gs \to G^\gt$ so that if $\gs < \gt < \mu$ then $\phi_{\gs\mu} = \phi_{\gs\gt}\phi_{\gt\mu}$.
Let $G$ denote the direct limit of this directed system.
A simple complex of groups is \emph{strictly developable} if the canonical maps $G^{\gs} \to G$ are injective.

Now, if a group $G$ acts on a CW-complex $Z$ with strict fundamental domain $Q$, then the collection of stabilizers $G^{\gs}$ of cells in $Q$ is a strictly developable simple complex of groups (over itself partially ordered by inclusion) and, if $Z$ is connected and simply connected, then its direct limit is $G$.

In the opposite direction, given a strictly developable simple complex of groups over $\cP$, let $\abs{\cP}$ be the geometric realization of $\cP$.
Every point $x \in \abs{\cP}$ is contained in a simplex corresponding to some chain in $\cP$, and we let $\gs(x)$ denote the smallest element of such chain.
Let $\cu(G, \abs{\cP} )$ be the \emph{basic construction} associated to this data:
\[
\cu(G, \abs{\cP}) = (G \times \abs{\cP}) /\sim, \
\]
where $(g,x) \sim (g',x')$ if and only if $x = x'$ and $gG^{\gs(x)} = g'G^{\gs(x)}$.

Then $G$ acts on $\cu(G, \abs{\cP} )$ with $1 \times \abs{\cP} $ as a strict fundamental domain, which we identify with $\abs{\cP}$.
The stabilizer of a point $(g,x)$ in $\cu(G, \abs{\cP})$ is the conjugate $gG^{\gs(x)}g^{-1}$.
In particular, if $\gt$ is a simplex in $ \abs{\cP}$, then the stabilizer of $\gt$ is the group $G^{\min \gt}$, where $\min \gt$ is the smallest element in the chain representing $\gt$.

In general $\cu(G, \abs{\cP} )$ is not a contractible complex, but in many interesting cases it is.
Since $\abs{\cP}$ is a retract of $\cu(G, \abs{\cP})$, this at least requires contractibility of $ \abs{\cP} $.

A special, but common case is when the poset is the poset $\cS(L)$ of simplices of a simplicial complex $L$ ordered by inclusion (including the empty simplex, one usually assumes $G^{\emptyset}=1$.) The geometric realization of this poset is called the \emph{Davis chamber} $K$.
It is isomorphic to the cone on the barycentric subdivision of $L$.
The geometric realization of the poset of nonempty simplices $\cS_0(L)$ is isomorphic to the barycentric subdivision of $L$ and will be denoted $\partial K$.
\begin{Example}
    Let $L$ be a flag complex, and let $\{G^v\}_{v \in L^0}$ be a collection of groups.
    For a simplex $\gs \in \cS(L)$, let $G^\gs = \prod_{v \in \gs} G^v$, and if $\gs < \gt$, let $i_{\gs\gt}$ be the natural inclusion of direct products.
    The direct limit of this strictly developable simple complex of groups is the \emph{graph product} of the $\{G^v\}$.
    For example, if all vertex groups are infinite cyclic, then $G_L$ is a RAAG. For graph products, the complex $\cu(G_{L}, K)$ admits a CAT(0) cubical metric, and is hence contractible.
    Furthermore, if each vertex group is residually finite, then $G_L$ is as well \cite{g90}.
\end{Example}
\begin{Example}
    Suppose that $A$ is a (not necessarily right-angled) Artin group with nerve $L$.
    Given a simplex $\gs$ in $L$, there is an associated spherical Artin group $A^\gs$, and if $\gs < \gt$, there is a natural inclusion $A^\gs \to A^\gt$.
    Furthermore, the natural map $A^\gs \to A$ is injective \cite{l83}.
    Therefore, $A$ is the direct limit of the strictly developable simple complex of groups over $S(L)$ with local groups $A^\gs$.
    In this case, the basic construction is called the Deligne complex.
    One version of the $K(\pi,1)$-conjecture for Artin groups states that the Deligne complex is contractible, and this is known in many cases, see \cite{cd95}.
    By Deligne's theorem \cite{d72}, any spherical Artin group has a finite classifying space.
    We shall also need the fact that these spherical Artin groups have normal infinite cyclic subgroups \cites{bs72, d72}.
    Spherical Artin groups are known to be linear \cite{cw02}, and hence residually finite.
    There are few examples of non-spherical Artin groups known to be residually finite, see \cite{j22}.
\end{Example}

\section{Homological growth}\label{s:torsiongrowth}

We now compute $b_{i}^{(2)}(G; \Fp)$ and $t_{i}^{(2)}(G)$ for certain groups which act on contractible complexes with strict fundamental domain.

\subsection{A spectral sequence for \texorpdfstring{$b_{i}^{(2)}(G; \Fp)$}{b2(G; Fp)}}\label{s:computation}
The following theorem generalizes \cite{aos21}*{Theorem 1.1}.
We say a group $G$ is $\Fp$-$L^2$-acyclic if $b_{i}^{(2)}(G; \Fp) = 0$ for all $i$ and all residual chains of subgroups.
\begin{Theorem}\label{t:ss}
    Suppose a residually finite group $G$ acts on a contractible complex $Z$ with a strict fundamental domain $(Q,\d Q)$, so that all nontrivial stabilizers are $\Fp$-$L^2$-acyclic.
    Then
    \[
    b_{i}^{(2)}(G; \Fp) = b_{i}(Q, \d Q; \Fp)=\bar b_{i-1}(\d Q; \Fp),
    \]
    where $\bar b_{i-1}(\d Q; \Fp)$ is the reduced $\Fp$-Betti number of $\d Q$.
\end{Theorem}
\begin{proof}
    For each $\gG_{k}$, there is an equivariant homology spectral sequence \cite{b94}*{(7.10), p.~174} converging to $H_{i+j}(\gG_{k}; \Fp)$, with the $E^1$-sheet given by:
    \[
    E^{1}_{i j}= \bigoplus_{\gs \in Q^{(i)}} H_{j}(G^{\gs}; \Fp[G/\gG_{k}]) \implies H_{i+j}(G; \Fp[G/\gG_{k}]) = H_{i+j}(\gG_{k}; \Fp).
    \]
    Let $\gG_{k}^\gs = G^\gs \cap \gG_{k}$.
    The terms $H_{j}(G^{\gs}; \Fp[G/\gG_{k}])$ decompose as
    \[
    H_{j}(G^{\gs}; \Fp[G/\gG_{k}]) = \bigoplus_{\frac{[G:\gG_{k}]}{[G^{\gs}:\gG_{k}^\gs]}} H_{j}(\gG_{k}^\gs; \Fp).
    \]
    After taking Betti numbers and dividing by $[G:\gG_{k}]$, the terms corresponding to nontrivial stabilizers go to zero by assumption, so up to sublinear error terms in the $E^1$-sheet are concentrated in the $j = 0$ row.
    Therefore, again up to sublinear error the spectral sequence degenerates at the $E^2$-sheet.

    Define a coefficient system $V$ on $Q$ which associates the trivial group to a simplex in the singular set $\d Q$ and $\Fp[G/\gG_{k}]$ to the others.
    Since $Q$ is a strict fundamental domain, the chain complex in the $j = 0$ row $E^{1}_{*,0}$ up to sublinear error is $C_\ast(Q; V)$.
    Hence the $E^{\infty}$ sheet is concentrated up to sublinear error in the $j = 0$ row where it equals $H_\ast(Q; V)$.

    There is an exact sequence
    \[
    0 \to C_\ast(\d Q; \Fp[G/\gG_{k}]) \to C_\ast(Q; \Fp[G/\gG_{k}]) \to C_\ast(Q; V) \to 0.
    \]
    So we have $H_\ast(Q; V) = H_\ast(Q, \d Q; \Fp[G/\gG_{k}])$, and taking Betti numbers and dividing by $[G:\gG_{k}]$ gives $b_{i}^{(2)}(G; \Fp) = b_{i}(Q, \d Q; \Fp)$.
    Finally, the strict fundamental domain $Q$ is a retract of $Z$, so it is contractible and therefore $b_{i}(Q, \d Q; \Fp)=\bar b_{i-1}(\d Q; \Fp)$.
\end{proof}
\begin{remark}
    A similar formula computes $b_i^{(2)}(G)$ for groups acting on a contractible complex with strict fundamental domain $(Q,\partial Q)$ and stabilizers either trivial or $L^2$-acyclic (this follows from an equivariant homology spectral sequence, or the computations in \cite{do12}).
    In this case, $G$ does not have to be residually finite.
    In Section \ref{s:l2torsion}, we use the fact that such a $G$ is $L^2$-acyclic if and only if $\d Q$ is $\qq$-acyclic.
\end{remark}

\subsection{Torsion growth}

We begin our inductive approach to computing torsion growth by showing that circles can be rebuilt with precise control over the norms of the rebuilding maps.
This is similar in spirit to \cite{abfg21}*{Lemma 10.10}.
\begin{lemma}\label{l:circle}
    Let $\mathcal{C}$ and $\mathcal{C}'$ be two circles cellulated with $m$ edges and $n = \floor*{m/k}$ edges for some integer $k < m/2$.
    Then there are homotopy inverses $g: \mathcal{C} \rightarrow \mathcal{C}', g': \mathcal{C}' \rightarrow \mathcal{C}$ so that $gg'$ induces the identity on chains and there is a homotopy $\gs$ between $g'g$ and $\id$ so that the norms of $g$, $g'$ and $\gs$ are all bounded above by $2k$.
\end{lemma}
\begin{proof}

    Divide $\mathcal{C}$ into $(n-1)$ intervals of length $k$ and one interval of length $< 2k$.
    The map $g$ collapses each interval to a single edge, and the map $g'$ maps each edge of $C'$ to an interval.
    The composition $g'g: \mathcal{C} \rightarrow \mathcal{C}$ collapses all but one edge in each interval and maps one edge to the entire interval; the homotopy $\gs$ between $g'g$ and the identity sends each vertex $v$ to the interval between $v$ and $g'g(v)$.
    It suffices to compute the norms of $g, g'$ and $\gs$ restricted to a single interval, since chains in disjoint intervals have orthogonal images.
    The entries of the induced maps on chains of $g, g'$ and $\gs$ are all $\pm 1$, therefore the norms are all bounded by $2k$.
\end{proof}

We now can prove our main theorem.
\begin{Theorem}\label{t:main1}
    Suppose a residually finite group $G$ acts on a contractible complex $Z$ with a strict fundamental domain $(Q,\d Q)$, so that all nontrivial stabilizers have normal infinite cyclic subgroups.
    Then
    \[
    t_{i}^{(2)}(G)= \logtor H_{i-1}(\d Q).
    \]
\end{Theorem}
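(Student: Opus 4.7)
My plan is to follow the two-step strategy outlined in the introduction. First, apply Lemma~\ref{l:borel} to build a classifying space $BG$ of the form $Q \cup_{\d Q} Y$, where $Y$ is the subcomplex assembled from the stabilizer classifying spaces $BG^{\gs}$ for $\gs \in \d Q$. Since every nontrivial stabilizer $G^{\gs}$ has a normal infinite cyclic subgroup with type $F$ quotient, $BG^{\gs}$ can be modeled as a mapping torus, and analogous mapping-torus models exist for its finite-index subgroups. Applying the rebuilding machinery of Section~\ref{s:rebuilding} inductively to these cheap pieces produces a classifying space $B\Gamma_k$ whose subcomplex $Y_k$ covering $Y$ satisfies $\rk H_i(Y_k)/[G:\Gamma_k] \to 0$ and $\logtor H_i(Y_k)/[G:\Gamma_k] \to 0$, while the cellular boundary of $B\Gamma_k$ has norm polynomial in $[G:\Gamma_k]$ by Lemmas~\ref{l:control} and~\ref{l:cover}.

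Second, I would extract the torsion of $H_i(B\Gamma_k)$ from the long exact sequence of the pair $(B\Gamma_k, Y_k)$:
\[
\cdots \to H_i(Y_k) \xrightarrow{f_i} H_i(B\Gamma_k) \xrightarrow{g_i} H_i(B\Gamma_k, Y_k) \xrightarrow{d_i} H_{i-1}(Y_k) \to \cdots
\]
Since $Q$ is contractible, $H_i(B\Gamma_k, Y_k) \cong \bigoplus^{[G:\Gamma_k]} \tilde H_{i-1}(\d Q)$, which has log-torsion $[G:\Gamma_k]\, \logtor H_{i-1}(\d Q)$ and bounded torsion exponent $E = e(\tor H_{i-1}(\d Q))$ independent of $k$. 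Extracting the short exact sequence $0 \to \im f_i \to H_i(B\Gamma_k) \to \ker d_i \to 0$, the goal becomes to show $\logtor H_i(B\Gamma_k) = \logtor H_i(B\Gamma_k, Y_k) + o([G:\Gamma_k])$.

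For the lower bound, Lemma~\ref{l:logtor}(iv) applied to this SES contributes a correction term $\rk(\im f_i)\log e(\ker d_i)$, bounded by $\rk H_i(Y_k)\cdot \log E = o([G:\Gamma_k])$. Together with the estimate $\logtor \ker d_i \geq \logtor H_i(B\Gamma_k, Y_k) - \logtor H_{i-1}(Y_k)$, which follows from subadditivity applied to $0 \to \ker d_i \to H_i(B\Gamma_k, Y_k) \to \im d_i \to 0$, this yields the desired lower bound. For the upper bound, Lemma~\ref{l:logtor}(i) reduces the task to showing $\logtor \im f_i = o([G:\Gamma_k])$, where $\im f_i \cong H_i(Y_k)/\im d_{i+1}$.

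The main obstacle is this last estimate. Quotients of finitely generated abelian groups can create new torsion via the elementary divisors of the inclusion, so subadditivity alone is insufficient. My plan is to again invoke Lemma~\ref{l:logtor}(iv), this time for the SES $0 \to \im d_{i+1} \to H_i(Y_k) \to \im f_i \to 0$, yielding
\[
\logtor \im f_i \leq \logtor H_i(Y_k) + \rk(\im d_{i+1})\log e(\im f_i),
\]
and then to bound $\log e(\im f_i)$ via a Hadamard-type estimate on a chain-level representative of the connecting map $d_{i+1}$. The polynomial-norm bound on attaching maps from Lemmas~\ref{l:control} and~\ref{l:cover}, combined with the fixed torsion of $\tilde H_{i-1}(\d Q)$, should force the correction $\rk(\im d_{i+1})\log e(\im f_i)$ to be $o([G:\Gamma_k])$. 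Combining the two bounds gives $\logtor H_i(B\Gamma_k) = [G:\Gamma_k]\, \logtor H_{i-1}(\d Q) + o([G:\Gamma_k])$, and hence $t_i^{(2)}(G) = \logtor H_{i-1}(\d Q)$.
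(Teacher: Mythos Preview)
Your overall strategy matches the paper's: rebuild each $BG^{\gs}_k$ cheaply via its circle fibration, assemble $Y_k'$ with $O([G:\Gamma_k]/m_k)$ cells and boundary/attaching maps of log-norm $O(\log m_k)$, and then analyze the long exact sequence of the pair $(B\Gamma_k, Y_k')$ using Lemma~\ref{l:logtor}. Your lower bound is correct and is exactly what the paper does.

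The gap is in the upper bound, specifically your estimate for $\logtor \im f_i = \logtor\bigl(H_i(Y_k)/\im d_{i+1}\bigr)$. You propose to apply Lemma~\ref{l:logtor}(\ref{i:lb}) to $0 \to \im d_{i+1} \to H_i(Y_k) \to \im f_i \to 0$, which yields
\[
\logtor \im f_i \le \logtor H_i(Y_k) + \rk(\im d_{i+1})\,\log e(\im f_i),
\]
and then to bound $\log e(\im f_i)$ by a Hadamard estimate. But a Hadamard-type bound on the exponent of the cokernel of an integer matrix $M$ gives only $\log e(\operatorname{coker} M) \le (\rk M)\log\norm{M}$, the \emph{same} order of bound as for the full $\logtor$. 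Here $\rk M \le \abs{Y_k'} = O([G:\Gamma_k]/m_k)$ and $\log\norm{M} = O(\log m_k)$, so your bound for the correction term becomes
\[
\rk(\im d_{i+1})\cdot\log e(\im f_i) \;=\; O\!\left(\frac{[G:\Gamma_k]}{m_k}\right)\cdot O\!\left(\frac{[G:\Gamma_k]}{m_k}\log m_k\right),
\]
which is \emph{not} $o([G:\Gamma_k])$, since $m_k$ can grow arbitrarily slowly relative to $[G:\Gamma_k]$. (The ``fixed torsion of $\bar H_{i-1}(\d Q)$'' that you invoke controls $e(\ker d_i)$, not $e(\im f_i)$; it plays no role here.)

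The fix is to drop the detour through part~(\ref{i:lb}) and bound $\logtor\bigl(H_i(Y_k')/\im d_{i+1}\bigr)$ \emph{directly} at the chain level. This quotient is the cokernel of
\[
f + \partial:\; \bigoplus_{[G:\Gamma_k]} Z_i(\d Q)\;\oplus\;C_{i+1}(Y_k') \longrightarrow C_i(Y_k'),
\]
where $f$ is the rebuilt attaching map. The rank of this map is at most the number of $i$-cells in $Y_k'$, hence $O([G:\Gamma_k]/m_k)$, and its log-norm is $O(\log m_k)$ by Lemma~\ref{l:control}. A single Hadamard/covolume estimate then gives $\logtor \le O([G:\Gamma_k]\log m_k/m_k) = o([G:\Gamma_k])$. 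This is exactly what the paper does; the crucial point is that the sublinear rank appears \emph{once}, not squared.
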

\begin{proof}
    We are given for each nontrivial stabilizer $\{G^\gs\}_{\gs \in \d Q}$ an exact sequence
    \[
    1 \to C^\gs \to G^\gs \to H^\gs \to 1,
    \]
    where $C^\gs$ is infinite cyclic.
    We can choose a model for such a $BG^\gs$ which is an $S^1$-fibration over $BH^{\gs}$.
    Such a $BG^\gs$ admits a cylindrical filtration where
    \[
    (F_{j},E_{j}) = \bigsqcup_{\gt \in {BH^\gs}^{(j)}} S^1 \times (\gt, \partial \gt) .
    \]
    We assume that each fiber circle is cellulated with $1$ vertex.
    For a subgroup $\gG_{k}$ in our chain, let $G^\gs_{k} = G^\gs \cap \gG_{k}$, and $C^\gs_{k} = C^\gs \cap \gG_{k}$.
    Therefore, the cover $BG^\gs_{k}$ also admits a cylindrical filtration where
    \[
    (F_{j},E_{j}) = \bigsqcup_{\frac{[G^\gs: G^\gs_{k}]}{[C^\gs: C^\gs_{k}]}} \bigsqcup_{\gt \in {BH^{\gs}}^{(j)}} S^1 \times (\gt, \partial \gt) .
    \]
    At this point, the norm of the maps $f_{j}$ in the filtration are uniformly bounded independent of $k$ and each circle is cellulated with $[C^\gs: C^\gs_{k}]$ vertices.

    Let $m_{k} = \min_{\gs \in \d Q} \{[ C^\gs: C^\gs_{k}]\}$.
    By recellulating the fiber circles with $\floor*{[ C^\gs: C^\gs_{k}]/m_{k} } $ vertices we can recellulate each $BG^\gs_{k}$ as ${BG^\gs_{k}}'$ so that
    \[
    \frac{1}{2m_{k}}<\frac{\abs{{BG^\gs_{k}}'}}{\abs{BG^\gs_{k}}} \le \frac{1}{m_{k}},
    \]
    where $\abs{BG}$ is the total number of cells in $BG$.

    On each pair $S^1 \times (\gt, \partial \gt)$, by Lemma \ref{l:circle} we have that the norm of each rebuilding map is less than $2m_{k}$.
    Therefore, Lemma \ref{l:control} guarantees that for each $k$, the $\log$ norm of each rebuilt map for ${BG^\gs_{k}}'$ is bounded by $C\log(m_{k})$ for a constant $C$ independent of $k$.

    We now use the Borel construction and a rebuilding procedure as in Lemma \ref{l:borel} to construct a model for $BG$ which admits a cylindrical filtration of the form
    \[
    (F_{j}, E_{j}) = \bigsqcup_{\gs \in \d Q^{(j)}} BG^\gs \times (\gs, \partial \gs); \quad (F_{n+1}, E_{n+1}) = (Q,\d Q).
    \]
    Let $Y$ be the subcomplex of $BG$ constructed at the penultimate stage of this filtration, and let $Y_{k}$ be its lift to $B\gG_{k}$.
    Then $Y_{k}$ admits a cylindrical filtration where
    \[
    (F_{j}, E_{j}) = \bigsqcup_{\gs \in \d Q^{(j)}} \bigsqcup_{\frac{[G: \gG_{k}]}{[G^\gs: G^\gs_{k}]}} BG^\gs_{k} \times (\gs,\partial \gs).
    \]
    Again, at this point, the norm of the maps $f_{j}$ in the filtration are uniformly bounded independent of $k$ and each circle is cellulated with $[C^\gs: C^\gs_{k}]$ vertices.

    If we replace $BG^\gs_{k}$ with ${BG^\gs_{k}}'$, and use the previously rebuilt maps to rebuild $Y_{k}$ to $Y'_{k}$, then the number of cells in $Y'_{k}$ is $\le \frac{\abs{BG}[G:\gG_{k}]}{m_{k}}$.
    Now, we attach $[G: \gG_{k}]$-copies of $Q$ to $Y'_{k}$ to form $B\gG_{k}$.
    The attaching maps $f$ are rebuilt versions of the inclusion maps $\d Q \to Y_{k}$.
    By Lemma \ref{l:control}, we have that
    \[
    \log \norm{\d_{B\gG_{k}}} < D'\log(m_{k})
    \]
    for a constant $D'$ depending only on $BG$.
    Let $C = D'\abs{BG}$.
    We have a short exact sequence of chain complexes
    \[
    0 \to C_{*}(Y'_{k}) \to C_{*}(B\gG_{k}) \to C_{*}(B\gG_{k}, Y'_{k}) \to 0.
    \]
    The relative complex excises: $C_{*}(B\gG_{k}, Y'_{k}) = \oplus_{[G:\gG_{k}]}C_{*}(Q, \d Q)$.
    Since $Q$ is contractible, $H_{i}(Q, \d Q)= \bar H_{i-1}(\d Q)$ and the corresponding long exact sequence looks like
    \[
    \bigoplus_{[G: \gG_{k}]} \bar H_{i}(\d Q) \xrightarrow{f_{*}} H_{i}(Y'_{k}) \to H_{i}(B\gG_{k}) \to \bigoplus_{[G: \gG_{k}]} \bar H_{i-1}(\d Q) \to H_{i-1}(Y'_{k}).
    \]
    Note that
    \begin{align*}
        \logtor \bigoplus_{[G: \gG_{k}]} \bar H_{i-1}(\d Q) &= [G: \gG_{k}] \logtor \bar H_{i-1}(\d Q)), \\
        e\big(\bigoplus_{[G: \gG_{k}]} \bar H_{i-1}(\d Q)\big) &=e(\bar H_{i-1}(\d Q)).
    \end{align*}
    Therefore, applying Lemma~\ref{l:logtor1} to the above sequence and normalizing gives
    \begin{multline*}
        0 \le \logtor \bar H_{i-1}(\d Q) -\frac{\logtor H_{i}(B\gG_{k})}{[G: \gG_{k}]} + \frac{\logtor (H_{i}(Y'_{k})/\im f_{*}) }{[G: \gG_{k}]} \le \\
        \le \frac{\logtor H_{i-1}(Y'_{k})}{[G: \gG_{k}]} + e(\bar H_{i-1}(\d Q))\frac{\rk H_{i}(Y'_{k}) }{[G: \gG_{k}]}.
    \end{multline*}

    By Proposition \ref{p:gabber}, the terms $\logtor H_{i-1}(Y'_{k})$ and $ \rk H_{i}(Y'_{k})$ are bounded by $\frac{C[G:\gG_{k}]}{m_{k}}\log(m_{k})$, so we only need to bound $\logtor H_{i}(Y'_{k})/\im(f_{*})$.
    This is the same as bounding the $\logtor$ of the cokernel of the map
    \[
    f + \partial: \bigoplus_{[G: \gG_{k}]} Z_{i}(\d Q) \oplus C_{i+1}(Y'_{k}) \to Z_{i}(Y'_{k}).
    \]
    Since $Z_{i}(Y'_{k})$ is a direct summand of $C_{i}(Y'_{k})$, this is the same as the logtor of the cokernel of the map
    \[
    f + \partial: \bigoplus_{[G: \gG_{k}]} Z_{i}(\d Q) \oplus C_{i+1}(Y'_{k}) \to C_{i}(Y'_{k}).
    \]
    The rank of the last matrix $f+\partial$ is bounded by the number of cells in $Y'_{k}$.
    Therefore, Proposition \ref{p:gabber} implies that the $\logtor$ is again bounded by $\frac{C[G:\gG_{k}]}{m_{k}} \log(m_{k})$.
    As $k \to \infty$ implies $m_{k} \to \infty$, we are done.
\end{proof}
\begin{Remark}
    In \cite{abfg21}*{Corollary 10.14}, Abert, Bergeron, Fraczyk, and Gaboriau showed that groups with normal $\zz$-subgroups have what they call cheap rebuilding property.
    This is precisely what we used in the above proof: finite index subgroups can be recellulated with sublinear number of cells and subexponentially bounded boundary maps.
    The proof above extends to groups acting on contractible complexes with strict fundamental domain where the stabilizers are either trivial or have their cheap rebuilding property.
\end{Remark}
\begin{Remark}
    If in Theorem~\ref{t:ss} (respectively Theorem~\ref{t:main1}) instead of assuming the existence of a strict fundamental domain, we assume that \emph{all} stabilizers are $\Fp$-$L^2$-acyclic (respectively have normal infinite cyclic subgroups with type $F$ quotient), then the first parts of the arguments show the vanishing of $b_{i}^{(2)}(G; \Fp)$ (respectively $t_{i}^{(2)}(G)$.) The point being that one can still use the Borel construction and rebuilding to construct models for $BG$ and $B\gG_k$; for Theorem~\ref{t:ss} we get a spectral sequence with all the terms of the $E_1$-sheet sublinear, and for Theorem~\ref{t:main1} we get a rebuilt complex with sublinear growth of cells and polynomially bounded norms of boundary maps.
\end{Remark}
\begin{Remark}
    For Theorem \ref{t:ss}, we only need the $G$-complex $Z$ to be $\Fp$-acyclic, and for Theorem \ref{t:main1}, we only need $Z$ to be acyclic.
    The point being that the Borel construction $EG \times_G Z$ can still be used to compute the $\Fp$ or $\zz$-homology of $G$ and $\gG_{k}$.
\end{Remark}

\section{Universal \texorpdfstring{$L^2$}{L2}-torsion}\label{s:l2torsion}

Friedl and L\"uck~\cite{fl17} introduced \emph{the universal $L^{2}$-torsion} $\gt^{(2)}_{u}$ of an $L^{2}$-acyclic complex of finitely generated based free $\zz G$-modules.
It is an element of the \emph{weak} Whitehead group $\Wh^{w}(G)=K_{1}^{w}(G)/\pm G$, where the weak $K_{1}$-group $K_{1}^{w}(G)$ is an abelian group generated by such complexes with the usual relations (we use multiplicative notation):
\[
\left[0 \to \zz G \xrightarrow{id} \zz G \to 0 \right]=1,
\]
and, if
\[
0 \to A_{*} \to B_{*} \to C_{*} \to 0
\]
is a short exact sequence of such complexes, then
\[
[B_{*}]=[A_{*}][C_{*}].
\]
Furthermore, they showed that $K_{1}^{w}(G)$ can be naturally identified with the group generated by square matrices over $\zz G$ which induce weak isomorphisms on $ \ell^2(G)^n$ with the relations:
\[
[AB]=[A][B]=\left[
\begin{pmatrix}
    A & * \\
    0 & B
\end{pmatrix}
\right].
\]

There are two classical generalizations of determinant due to Fuglede and Kadison, and to Dieudonn\'e, which one can try to apply here.
Let $\cn(G)$ be the group von Neumann algebra of $G$-equivariant bounded operators on $\ell^2(G)$.
The Fuglede--Kadison determinant is defined on $\cn(G)$ matrices and takes values in $\rr_{\geq 0}$.
Let $A$ be such a matrix, thought of as $G$-equivariant bounded operator $\ell^2(G)^n \to \ell^2(G)^m$.
Let $F: [0,\infty) \to [0,\infty)$ be the associated spectral density function, defined by
\[
F(\gl)=\sup \{ \dim_G L \mid \norm{A_{| L}} \leq \gl, \ L \text{ is a Hilbert $G$-submodule of $\ell^2(G)^n$ } \}.
\]
The Fuglede--Kadison determinant of $f$, denoted by $\detg f$, is given by
\[
\detg A = \exp \int_{0+}^\infty \ln(\lambda) dF,
\]
with the convention $\exp(-\infty)=0$.
The group $G$ is \emph{of determinant class} if $\detg A \geq 1$ for any $\zz G$ matrix.

The Fuglede--Kadison determinant $\det_{G}$ is not quite a multiplicative map, but it has enough multiplicativity to preserve relations on the matrix generators of $K_{1}^{w}(G)$~\cite{l02}*{Theorem 3.14(1) and (2)}, and if $G$ is of determinant class we avoid division by 0 when taking inverses, thus we have a homomorphism $\detg: \Wh^{w}(G) \to \rr_{> 0}$.
The image of $\gt^{(2)}_{u}$ under the composition $\log \det_{G}$ is the usual $L^{2}$-torsion $\gr^{(2)}$.

The Dieudonn\'e determinant is a homomorphism from the group of invertible matrices over a skew field to the abelianization of the multiplicative group of the skew field.
It is essentially obtained by doing row reduction and taking the (abelianized) product of the diagonal entries of the result.

A skew field arises as follows.
We can embed $\cn(G)$ into the algebra of affiliated operators $U(G)$.
The algebra $U(G)$ can be defined algebraically as an Ore localization of $\cn(G)$ with respect to the set of non-zero divisors ($\cn(G)$ is far from being an integral domain.) Therefore, elements of $U(G)$ can be expressed as $fg^{-1}$ where $f,g \in \cn(G)$ and $g$ is a weak isomorphism $\ell^2(G) \to \ell^2(G)$.
Let $D(G)$ denote the division closure of $\zz G$ inside $U(G)$ (this is the smallest subring of $U(G)$ containing $\zz G$ which is division closed, i.e.\ elements in $D(G)$ which are invertible in $U(G)$ are already invertible in $D(G)$.) By work of Linnell~\cite{l93} a torsion-free group $G$ satisfies the Atiyah conjecture on integrality of $L^2$-Betti numbers \cite{a76}*{p.
72} if and only if $D(G)$ is a skew field.
In this case, the $L^{2}$-Betti numbers of a complex of finitely generated based free $\zz G$-modules can be computed by tensoring with $D(G)$ and taking the dimension over $D(G)$, $\dim_{D(G)}$, of the resulting homology.
Note that if $G$ has torsion, then $\zz G$ has nontrivial zero divisors and cannot embed into a skew field.

So, assuming that $D(G)$ is a skew field, and also that our complex is $L^{2}$-acyclic, we have that tensoring with $D(G)$ produces an acyclic complex over $D(G)$, and we can define the Reidemeister torsion $\gt$ with coefficients in $D(G)$, see \cite{hk21}*{Section 4}.
It is an element of $\Wh(D(G))=K_1(D(G))/\pm G$, which, via the Dieudonn\'e determinant, can be identified with $D(G)^\times_{\text{ab}}/\pm G$.
By~\cite{hk21}*{Theorem 4.13} the natural map $\Wh^{w}(G) \to \Wh(D(G))$ takes $\gt^{(2)}_{u}$ to $\gt$.

If $X$ is a free $L^{2}$-acyclic cocompact $G$-CW-complex, then by taking the universal torsion of the chain complex we obtain $\gt^{(2)}_{u}(X;G)$, If $G$ is torsion-free and satisfies the Atiyah conjecture we also have $\gt(X;G)$.
These are simple equivariant homotopy invariants.
To get homotopy invariance we need to assume vanishing of the ordinary Whitehead group $\Wh(G)$, see \cite{fl17}*{Theorem 2.5(1)}.

Similarly, if $G$ is of determinant class, then we can define $\gr^{(2)}(X;G)$.
However, since in this case $\detg A=1$ for any invertible $\zz G$ matrix, and therefore $\Wh(G) \subset \ker \det_{G}$, $\gr^{(2)}(X;G)$ is an equivariant homotopy invariant.

If $G$ is $L^{2}$-acyclic, and satisfies the appropriate conditions, then by choosing a finite model for the classifying space and taking the universal cover we obtain $\gt^{(2)}_{u}(G)$, $\gt(G)$, and $\gr^{(2)}(G)$.
These invariants have been computed explicitly for certain classes of groups, see \cite{flt19} for a readable summary of previous computations.
\begin{remark}\label{r:cod}
    Conjecturally all groups are of determinant class.
    The class of such groups is known to be very large, it contains all sofic groups~\cite{es05}, so in particular all residually finite groups.
    Also, conjecturally all torsion-free groups satisfy the Atiyah conjecture and have trivial $\Wh(G)$.
    The latter class is also known to be rather large, in particular it contains all CAT(0) groups~\cite{h93}.
    Comparatively, our knowledge about the Atiyah conjecture is rather limited.
\end{remark}

In the formulas below we will use multiplicative notation for $\gt^{(2)}_{u}$ and $\gt$, and additive for $\gr^{(2)}$.
\begin{Theorem}\label{t:l2torsion1}
    Let $G$ be a group acting on a contractible complex $Z$ with strict fundamental domain $(Q,\d Q)$, so that all nontrivial stabilizers are $L^2$-acyclic.
    Assume $G$ and all stabilizers have $\Wh = 1$, and assume that $\d Q$ is $\qq $-acyclic.
    Then
    \[
    \gt^{(2)}_u(G) = \left(\prod_{i \ge 2} \abs{\tor H_{i-1}(\d Q)}^{(-1)^i} \right)\prod_{\gs \in \d Q} \gt^{(2)}_u(G^\gs)^{(-1)^{\dim \gs}}.
    \]
    If in addition $G$ is of determinant class, then
    \[
    \gr^{(2)}(G) = -\sum_{i \ge 1} (-1)^i \logtor{H_{i}(\d Q)} + \sum_{\gs \in \d Q} (-1)^{\dim \gs} \gr^{(2)}(G_\gs).
    \]
\end{Theorem}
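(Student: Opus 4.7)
The plan is to build a finite model of $BG$ via Lemma~\ref{l:borel}, then exploit its cylindrical filtration together with the multiplicativity of universal $L^{2}$-torsion in short exact sequences. Lemma~\ref{l:borel} provides a finite $BG$ with cylindrical filtration $\emptyset=X_{0}\subset\cdots\subset X_{n}\subset X_{n+1}=BG$ whose $i$-th pair for $i\leq n$ is $(F_{i},E_{i})=\bigsqcup_{\gs\in(\d Q)^{(i)}} BG^{\gs}\times(\gs,\d\gs)$, and whose final pair is $(Q,\d Q)$. Set $Y=X_{n}$; on the universal cover, $\widetilde{Y}$ is assembled from the nontrivially-stabilized cells while $\widetilde{BG}$ is obtained from $\widetilde{Y}$ by attaching $G$-translates of lifts of $Q$. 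Using the $\qq$-version of Theorem~\ref{t:ss} together with the $\qq$-acyclicity of $\d Q$, both $\widetilde{Y}$ and $\widetilde{BG}$, and likewise the relative complex, are $L^{2}$-acyclic over $G$.

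The short exact sequence $0\to C_{*}(\widetilde{Y})\to C_{*}(\widetilde{BG})\to C_{*}(\widetilde{BG},\widetilde{Y})\to 0$ of based free $\zz G$-complexes and multiplicativity of $\gt^{(2)}_{u}$ give $\gt^{(2)}_{u}(G)=\gt^{(2)}_{u}(\widetilde{Y};G)\cdot\gt^{(2)}_{u}(\widetilde{BG},\widetilde{Y};G)$ in $\Wh^{w}(G)$. Excision identifies the relative complex with $\zz G\otimes_{\zz}C_{*}(Q,\d Q)$; since $\d Q$ is $\qq$-acyclic, $H_{*}(Q,\d Q)\cong\bar H_{*-1}(\d Q)$ is finite, so the universal $L^{2}$-torsion of this tensored complex is the image in $\Wh^{w}(G)$ of the classical Reidemeister torsion of $C_{*}(Q,\d Q)$. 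The elementary identity $[\zz G\xrightarrow{k}\zz G]=k\in\Wh^{w}(G)$, evaluated on cyclic summands of each $H_{n-1}(\d Q)_{\tor}$, yields $\gt^{(2)}_{u}(\widetilde{BG},\widetilde{Y};G)=\prod_{n\geq 2}\abs{H_{n-1}(\d Q)_{\tor}}^{(-1)^{n}}$.

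Iterating multiplicativity along $X_{0}\subset\cdots\subset X_{n}$, each successive quotient splits as a sum over $\gs\in(\d Q)^{(i)}$ of induced complexes $\zz G\otimes_{\zz G^{\gs}}\bigl(C_{*}(\widetilde{BG^{\gs}})\otimes C_{*}(\gs,\d\gs)\bigr)$. Since $C_{*}(\gs,\d\gs)$ is $\zz$ concentrated in degree $\dim\gs$, this is $C_{*}(\widetilde{BG^{\gs}})$ shifted by $\dim\gs$, contributing the sign $(-1)^{\dim\gs}$ to the torsion. Compatibility of $\gt^{(2)}_{u}$ with the inclusion $G^{\gs}\hookrightarrow G$ then identifies each factor with the image of $\gt^{(2)}_{u}(G^{\gs})^{(-1)^{\dim\gs}}$ under $\Wh^{w}(G^{\gs})\to\Wh^{w}(G)$, proving the first formula. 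The $\gr^{(2)}$ formula follows by applying the homomorphism $\log\detg\colon\Wh^{w}(G)\to\rr$: it sends $k\in\zz_{>0}$ to $\log k$, which after reindexing turns the first factor into $-\sum_{n\geq 1}(-1)^{n}\logtor H_{n}(\d Q)$, and it sends induced classes to the corresponding $\gr^{(2)}(G^{\gs})$ by the standard induction formula for the Fuglede--Kadison determinant.

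The main obstacle is justifying the induction compatibility: that the class in $\Wh^{w}(G)$ of $\zz G\otimes_{\zz G^{\gs}}C_{*}(\widetilde{BG^{\gs}})$ equals the image of $\gt^{(2)}_{u}(G^{\gs})\in\Wh^{w}(G^{\gs})$, and at the $\detg$-level that the induced $G$-torsion agrees with the $G^{\gs}$-torsion. Both facts are standard in spirit for equivariant torsion invariants but must be verified from the matrix description of weak $K_{1}$ recorded before the theorem. Throughout, the hypotheses $\Wh(G)=\Wh(G^{\gs})=1$ are used to upgrade $\Wh^{w}$-valued classes to genuine invariants of the underlying spaces, independent of the basis choices involved in the cylindrical filtration.
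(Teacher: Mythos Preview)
Your argument is correct and follows essentially the same route as the paper: build the finite $BG$ via Lemma~\ref{l:borel}, lift the cylindrical filtration to $EG$, and apply multiplicativity of $\gt^{(2)}_{u}$ along the short exact sequences of the filtration, identifying the top relative term with the Reidemeister torsion of $(Q,\d Q)$ and each lower relative term with the induced $\gt^{(2)}_{u}(G^{\gs})^{(-1)^{\dim\gs}}$. You are in fact slightly more explicit than the paper about why each piece is $L^{2}$-acyclic and about the induction compatibility $\Wh^{w}(G^{\gs})\to\Wh^{w}(G)$, which the paper simply asserts.
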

\begin{proof}
    Since $\Wh(G) = 0$, we can compute $\gt^{(2)}_u(G)$ by computing $\gt^{(2)}_u$ of a convenient model for $BG$.
    After doing the Borel construction and our rebuilding procedure, as in Lemma \ref{l:borel}, we have a finite model for $BG$ with a cylindrical filtration of the form
    \[
    (F_{j}, E_{j}) = \bigsqcup_{\gs \in \d Q^{(j)}} BG^\gs \times (\gs, \partial \gs); \quad (F_{n+1}, E_{n+1}) = (Q,\d Q),
    \]
    with finite $BG^{\gs}$.

    Lifting this filtration to the universal cover gives a $G$-filtration
    \[
    \emptyset = X_{-1} \subset X_0 \subset X_1 \subset \dots \subset X_{n} \subset X_{n+1} = EG.
    \]
    Note that for $ 0\leq j \leq n $, $X_{j} - X_{j-1}$ is a disjoint union of copies of $EG^{\gs}\times \gs^{\circ}$, $\gs \in \d Q^{(j)}$, and the stabilizer of each copy is a conjugate of $G^{\gs}$.
    Since $\Wh(G^{\gs}) = 0$, we conclude
    \[
    \gt^{(2)}_{u}(X_{j}, X_{j-1}; G)= \prod_{\gs \in \d Q^{(j)}} \gt^{(2)}_{u} (G^{\gs})^{(-1)^{\dim \gs}}.
    \]

    For $j=n+1$, $EG - X_{n}$ is a disjoint union of copies of $Q - \d Q$, freely permuted by $G$.
    Hence,
    \[
    \gt^{(2)}_{u}(EG, X_{n}; G)= \gt^{(2)}_{u} (Q,\d Q; 1) = \prod_{i \ge 2} \abs{\tor H_{i-1}(\d Q)}^{(-1)^i}.
    \]
    The first formula follows from the sum formula applied to short exact sequences:
    \[
    0 \to C_{*}(X_{j-1}) \to C_{*}(X_{j}) \to C_{*}(X_{j},X_{j-1}) \to 0.
    \]
    Applying $\log\detg$ proves the second formula.
\end{proof}

As in Theorem~\ref{t:ss} and Theorem~\ref{t:main1}, if we assume that $G$ acts on a contractible complex $Z$ with \emph{all} cell stabilizers $L^2$-acyclic, then we do not require a strict fundamental domain.
The proof above shows that in this case
\[
\gt^{(2)}_u(G) = \prod_{\gs \in Z/G} \gt^{(2)}_u(G^\gs)^{(-1)^{\dim \gs}}.
\]

This generalizes the fibration formula ~\cite{fl17}*{Theorem 3.11(5)} and leads to a more explicit formula in the situation of Theorem~\ref{t:main1}:
\begin{corollary}\label{c:nz}
    Suppose a group $G$ acts on a contractible complex $Z$ with a strict fundamental domain $(Q,\d Q)$, so that each nontrivial stabilizer $G^{\gs}$ has a normal infinite cyclic subgroup $\zz=\langle s_{\gs} \rangle$.
    Assume $G$ and all stabilizers have $\Wh = 1$, and assume that $\d Q$ is $\qq $-acyclic.
    Then
    \[
    \gt^{(2)}_u(G) = \left(\prod_{i \ge 2} \abs{ \tor H_{i-1}(\d Q)}^{(-1)^i} \right) \prod_{\gs \in Q} i_{*}([s_{\gs}-1])^{(-1)^{\dim \gs}\chi(G^{\gs}/\zz)}.
    \]
    where $i_*$ is induced by the inclusion $i: G^{\gs} \rightarrow G$.
    If in addition $G$ is of determinant class, then
    \[
    \gr^{(2)}(G) = -\sum_{i \ge 1} (-1)^i \logtor{H_{i}(\d Q)}.
    \]
\end{corollary}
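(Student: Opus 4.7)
The strategy is to apply Theorem~\ref{t:l2torsion1} and then simplify each stabilizer contribution via the fibration formula for universal $L^2$-torsion.

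First I would verify the hypotheses of Theorem~\ref{t:l2torsion1}. Each nontrivial $G^\gs$ contains $\langle s_\gs\rangle$ as an infinite normal amenable subgroup, so by Cheeger--Gromov it is $L^2$-acyclic. The statement implicitly requires $\gt^{(2)}_u(G)$ to be defined, which, combined with the $\qq$-coefficient analog of Theorem~\ref{t:ss} (L\"uck approximation), forces $\d Q$ to be $\qq$-acyclic. Theorem~\ref{t:l2torsion1} then gives
\[
	\gt^{(2)}_u(G) = \left(\prod_{n \ge 2} \abs{H_{n-1}(\d Q)_\tor}^{(-1)^n}\right) \prod_{\gs \in \d Q} \gt^{(2)}_u(G^\gs)^{(-1)^{\dim\gs}}.
\]

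Next I would compute each $\gt^{(2)}_u(G^\gs)$. The short exact sequence $1 \to \langle s_\gs\rangle \to G^\gs \to H^\gs \to 1$ realizes $BG^\gs$ as a principal circle bundle over $BH^\gs$. A direct chain-level computation on the universal cover $\rr$ shows $\gt^{(2)}_u(S^1) = [s_\gs - 1] \in \Wh^w(\zz)$, and the fibration formula \cite{fl17}*{Theorem 3.11(5)} for $L^2$-acyclic fibers yields
\[
	\gt^{(2)}_u(G^\gs) = i_*([s_\gs - 1])^{\chi(H^\gs)},
\]
where $i_*\colon \Wh^w(\zz)\to \Wh^w(G^\gs)$ is induced by the inclusion $\langle s_\gs\rangle \hookrightarrow G^\gs$. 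Substituting into the displayed formula above gives the first claim (with trivial factors assigned to cells with trivial stabilizer).

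For the second formula, I would apply $\log\detg$ to the first. The topological factor contributes $\sum_{n\ge 2}(-1)^n \logtor H_{n-1}(\d Q) = -\sum_{n\ge 1}(-1)^n\logtor H_n(\d Q)$ after reindexing. Each stabilizer factor drops out: by the standard restriction property of the Fuglede--Kadison determinant, $\detg_{G^\gs}(s_\gs - 1) = \detg_\zz(s_\gs - 1)$, and the latter equals $\exp\int_0^{2\pi}\log\abs{e^{i\theta} - 1}\, d\theta/2\pi = 1$, since the mean value of $\log\abs{1-z}$ around the unit circle vanishes. The main obstacle is really just bookkeeping in the fibration formula; the remaining ingredients (the fibration formula, the restriction property of $\detg$, and the identification $\gt^{(2)}_u(S^1)=[s_\gs-1]$) are all standard from the literature on universal $L^2$-torsion, so no essentially new difficulty should arise beyond what is already encoded in Theorem~\ref{t:l2torsion1}.
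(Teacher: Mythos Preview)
Your proposal is correct and follows essentially the same route as the paper: apply Theorem~\ref{t:l2torsion1}, compute $\gt^{(2)}_u(\zz)=[s-1]$ from the cellular chain complex of $\rr$, feed this through the fibration formula to get $\gt^{(2)}_u(G^\gs)=i_*[s_\gs-1]^{\chi(G^\gs/\zz)}$, and then use $\detg(s-1)=1$ for the second formula. The paper cites \cite{l02}*{Example~3.22} for this last fact, whereas you compute the integral directly; and you make explicit the verification that $\d Q$ is $\qq$-acyclic and that the stabilizers are $L^2$-acyclic, both of which the paper leaves implicit.
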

\begin{proof}
    The standard cellular chain complex of the universal cover of $S^{1}$ as a $\zz[\zz]$-module is
    \[
    0 \to \zz[s, s^{-1}] \xrightarrow{s-1} \zz[s, s^{-1}] \to 0,
    \]
    hence $\gt_u^{(2)}(\zz) =[s-1]$.
    Therefore, by the fibration formula for $\gt^{(2)}_u$ we have $\gt_u^{(2)}(G^{\gs}) = i_{*}[s_{\gs}-1]^{\chi(G^{\gs}/\zz)}$ and the first formula follows.
    By \cite{l02}*{Example 3.22}, the Fuglede--Kadison determinant of $(s-1)$ is $1$, which implies the second formula.
\end{proof}

The fibration formula implies that if $G$ and $H$ are $L^2$-acyclic groups, then $\gt^{(2)}_{u}(G \times H) = 1$, as $\chi(G) = \chi(H) = 0$.
This leads to a better formula for graph products of $L^2$-acyclic groups which satisfy all the assumptions of Theorem \ref{t:l2torsion1}, and a completely precise formula for RAAG's.
\begin{corollary}\label{c:l2torsion}
    Let $G_{L}$ be a graph product of $L^2$-acyclic groups $G_{v}$ with $\Wh(G_{L}) = \Wh(G_v) = 1$ based on a $\qq $-acyclic flag complex $L$.
    Then
    \[
    \gt^{(2)}_u(G_L) = \left(\prod_{i \ge 2} \abs{\tor H_{i-1}(L)}^{(-1)^i} \right)\prod_{v \in L^{(0)}} \left(i_\ast(\gt^{(2)}_u(G_v)) \right)^{-\rchi(\Lk_L(v))}.
    \]
    If in addition $G$ is of determinant class, then
    \[
    \gr^{(2)}(G_L) = -\sum_{i \ge 1} (-1)^i \logtor{H_{i}(\d Q)} - \sum_{v \in L^{(0)}} \gr^{(2)}(G_v) \rchi(\Lk_L(v)).
    \]
\end{corollary}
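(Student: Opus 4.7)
The plan is to specialize Theorem~\ref{t:l2torsion1} to the action of $G_{L}$ on the basic construction $\cu(G_{L}, K)$, with strict fundamental domain $(K, \partial K)$; since $\partial K$ is the barycentric subdivision of $L$ it is $\qq$-acyclic, and each nontrivial stabilizer $G^{\gs} = \prod_{v \in \gs} G_{v}$ is a product of $L^{2}$-acyclic groups and hence itself $L^{2}$-acyclic. The Whitehead hypothesis on $G_{L}$ and on each $G_{v}$ provides the input needed to invoke Theorem~\ref{t:l2torsion1}, modulo verifying $\Wh = 1$ on the intermediate direct products $G^{\gs}$, which I would treat as an implicit standing assumption (alternatively, one only needs the vanishing of $\gt^{(2)}_{u}(G^{\gs})$ for $\dim \gs \geq 1$, which is established directly below).

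The first step is to kill the contributions of simplices $\gs \in L$ with $\dim \gs \geq 1$. Writing $G^{\gs} = G_{v} \times H$ for any vertex $v \in \gs$ and iterating the Friedl--L\"uck product formula \cite{fl17}*{Theorem~3.11(5)}, both Euler characteristics vanish (each factor is $L^{2}$-acyclic), so $\gt^{(2)}_{u}(G^{\gs}) = 1$ whenever $\gs$ has at least two vertices. Thus in the product coming from Theorem~\ref{t:l2torsion1}, only those simplices $\tau$ of $\partial K$ whose poset-minimum is a single vertex survive.

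The second step is a combinatorial bookkeeping. A simplex $\tau$ of $\partial K = |\cS_{0}(L)|$ corresponds to a chain $\tau_{0} < \tau_{1} < \dots < \tau_{k}$ in $\cS_{0}(L)$, its stabilizer is $G^{\tau_{0}}$, and $\dim \tau = k$. Grouping by $\tau_{0} = v$, chains with minimum $v$ are in bijection (after deleting $v$) with the simplices of $|\cS_{0}(L)_{>v}| \cong \operatorname{bar}(\Lk_{L}(v))$ together with the empty chain, with dimension shifted by $1$. Therefore
\[
\sum_{\min \tau = v}(-1)^{\dim \tau} = 1 - \chi(\Lk_{L}(v)) = -\rchi(\Lk_{L}(v)),
\]
which yields the exponent of $i_{*}\gt^{(2)}_{u}(G_{v})$ in the first formula.

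Finally, applying $\log\detg$ gives the $\gr^{(2)}$ formula: the reindexing $n = m+1$ turns $\prod_{n \geq 2}|H_{n-1}(L)_{\tor}|^{(-1)^{n}}$ into $-\sum_{m \geq 1}(-1)^{m}\logtor H_{m}(\partial K)$, and each vertex factor $i_{*}\gt^{(2)}_{u}(G_{v})^{-\rchi(\Lk_{L}(v))}$ maps to $-\gr^{(2)}(G_{v})\rchi(\Lk_{L}(v))$. Given Theorem~\ref{t:l2torsion1}, nothing here is deep: it is an inductive use of the fibration formula together with an Euler-characteristic count, with the only subtle point being the Whitehead hypothesis on the intermediate stabilizers.
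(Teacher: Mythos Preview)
Your proposal is correct and follows the same route as the paper: apply Theorem~\ref{t:l2torsion1} to the action on $\cu(G_L,K)$, use the product/fibration formula (stated just before the corollary) to kill all stabilizers whose $\min\gs$ has at least two vertices, and then count the surviving simplices with $\min\gs=v$ via $\Lk_{bL}(v)=b\Lk_L(v)$, shifted by one in dimension, to obtain the exponent $-\rchi(\Lk_L(v))$. The paper's proof is terser but makes exactly these moves; the Whitehead issue you flag is handled in the paper by the lead-in sentence requiring ``all the assumptions of Theorem~\ref{t:l2torsion1}'', so treating it as an implicit standing hypothesis is consistent with the paper.
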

\begin{proof}
    If $\gs$ is a simplex in the boundary of the Davis chamber $\d K$, then its stabilizer is the product $\prod_{v \in \min\gs } G_{v}$, so its torsion is trivial unless $\min\gs$ is a single vertex $v$.
    Such simplices correspond to the simplices in $\Lk_{bL}(v)=b\Lk_{L}(v)$ (including the empty simplex), shifted by 1 in dimension.
    This implies the first formula, and taking Fuglede--Kadison determinant gives the second.
\end{proof}

In particular, since RAAG's have $\Wh=1$ and are residually finite, hence of determinant class, we have
\begin{corollary}\label{c:raag}
    Let $A_{L}$ be a RAAG based on a $\qq $-acyclic flag complex $L$.
    Let $S$ be the vertex set of $L$.
    Then
    \begin{align*}
        \gt^{(2)}_u(A_L) &= \left(\prod_{i \ge 2} \abs{\tor H_{i-1}(L)}^{(-1)^i} \right)\prod_{s \in S} \left([s-1]\right)^{-\rchi(\Lk_L(s))}, \\
        \gr^{(2)}(A_L) &= -\sum_{i \ge 1} (-1)^i \logtor{H_{i}(\d Q)} .
    \end{align*}
\end{corollary}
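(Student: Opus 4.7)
The plan is to derive this corollary as a direct specialization of Corollary~\ref{c:l2torsion} applied with every vertex group taken to be $\zz$. First I would verify the hypotheses on the vertex groups: $\zz$ is $L^2$-acyclic (with finite model $S^1$ for its classifying space), and $\Wh(\zz) = 1$ by classical $K$-theory. For $A_L$ itself, the Salvetti complex provides a proper cocompact action on a CAT(0) cube complex, so $A_L$ is a CAT(0) group and hence $\Wh(A_L) = 1$ by \cite{h93}. Since RAAGs are linear, they are residually finite, hence sofic, and therefore of determinant class by \cite{es05} (as noted in Remark~\ref{r:cod}); this takes care of the determinant-class hypothesis for the second formula.

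Next I would compute $\gt^{(2)}_u(\zz)$ directly, reusing the observation from the proof of Corollary~\ref{c:nz}: the cellular chain complex of $\rr$ viewed as a $\zz[\zz]$-module is
\[
0 \to \zz[s, s^{-1}] \xrightarrow{s-1} \zz[s, s^{-1}] \to 0,
\]
so $\gt^{(2)}_u(\zz) = [s-1]$. Substituting $i_\ast \gt^{(2)}_u(\zz_s) = [s-1]$ into the formula of Corollary~\ref{c:l2torsion} for each $s \in S$ immediately gives the first displayed formula.

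For the second formula, I would apply $\log \detg$ to both sides of the first. By \cite{l02}*{Example 3.22}, $\detg(s-1) = 1$, so $\log\detg[s-1] = 0$ and every vertex contribution to $\gr^{(2)}(A_L)$ vanishes. The remaining term is $\sum_{n \ge 2} (-1)^n \log \abs{H_{n-1}(L)_{\tor}}$, which after reindexing $m = n-1$ equals $-\sum_{m \ge 1} (-1)^m \logtor H_m(L)$; since $\d Q$ is the barycentric subdivision of $L$, this is the desired $-\sum_{n \ge 1}(-1)^n \logtor H_n(\d Q)$.

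The only points requiring care, rather than being genuine obstacles, are confirming the Whitehead and determinant-class hypotheses for $A_L$ so that Corollary~\ref{c:l2torsion} applies; both follow from the CAT(0) cubical structure and linearity of RAAGs as indicated above. Once these are in hand, the rest is mechanical substitution.
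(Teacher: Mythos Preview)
Your proposal is correct and follows essentially the same approach as the paper: the paper derives the corollary directly from Corollary~\ref{c:l2torsion} by noting that RAAG's have $\Wh=1$ and are residually finite (hence of determinant class), which is exactly what you do with more detail. Your explicit verification of the hypotheses and the computation $\gt_u^{(2)}(\zz)=[s-1]$ (already recorded in the proof of Corollary~\ref{c:nz}) are appropriate elaborations of the paper's one-line justification.
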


Theorem \ref{t:main1} together with Corollary~\ref{c:nz} show that under hypotheses of the theorem and the corollary the torsion analogue of L\"uck approximation holds.
In particular, it holds for RAAG's.

We now use the fact that RAAG's satisfy the Atiyah conjecture \cite{los12}.
For $L^2$-acyclic groups which satisfy the Atiyah conjecture, Friedl and L\"uck~\cite{fl17} show that $\gt(G)$ determines a formal difference of polytopes in $H_1(G; \rr)$ (they also assumed $\Wh(G) = 1$, this was shown to be unnecessary in \cite{k20}.) Friedl and L\"uck's motivation came from $3$-manifold topology, indeed for fundamental groups of most aspherical $3$-manifold $\d Q$, they showed that this (in this case a single) polytope is dual (up to scaling a by factor of $\frac{1}{2})$ to the unit ball of the Thurston norm on $M$~\cite{fl17}*{Theorem 3.35}.

Here is a rough description of the process.
(See~\cites{fl17, flt19} for details.) Let $G \xrightarrow{p} H_1(G; \zz)/\tor$ be the canonical surjection.
Given any element $f \in \zz G - 0$, we take the convex hull in $H_1(G; \rr)$ of the image of its support under $p$:
\[
P(f)=\conv(p(\supp(f))) \subset H_1(G;\rr).
\]
Polytopes in $H_1(G; \rr)$ up to translation form an abelian cancellative monoid under Minkowski sum (where the identity element is a single point.) Denote by $\cP(G)$ the associated Grothendieck group to this monoid, its elements are formal differences of polytopes.
An elementary observation is that if the ring $\zz G$ has no zero divisors, then $P$ is a homomorphism of monoids, $P(f g)=P(f)+P(g)$.

Friedl and L\"uck show that if $G$ is torsion-free and satisfies the Atiyah conjecture, then $P$ extends to a group homomorphism
\[
P: D(G)^{\times} \to \cP(G).
\]
Since the target is abelian and elements of $G$ are in the kernel, this descends to a map on $\Wh(D(G))$.
It follows from the homomorphism property, that although for general elements of $D(G)^{\times}$, $P$ is hard to compute, for elements of the form $f g^{-1}$ with $f, g \in \zz G$ it is just the formal difference of the convex hulls of the images of the supports of $f$ and $g$ under $p$.

The \emph{$L^2$-torsion polytope} $P(G)$ of $G$ is defined to be $-P(\gt(G))$ (the negative sign is arbitrary, but this makes the $L^2$-torsion polytope into a single polytope for many groups, such as most $3$-manifold groups, see also \cite{k20}.)

Therefore, Corollary~\ref{c:raag} gives the following computation of the $L^2$-torsion polytope for RAAG's.
We identify $H_1(A_L; \rr)$ with $\rr^S$ for $S$ the vertices of $L$.
Under this identification $P(s-1)$ is the unit interval in the $s$-direction, and we note that Minkowski sum of orthogonal intervals is a cube.
\begin{Theorem}\label{t:polytope}
    Let $A_L$ be a RAAG based on a $\qq $-acyclic flag complex $L$.
    Let $S^+$ denote the set of generators of $A_L$ with $\rchi(\Lk_L(s)) > 0$, and $S^-$ the set of generators of $A_L$ with $\rchi(\Lk_L(s))< 0$.

    Then the $L^2$-torsion polytope $P(A_L)$ is the formal difference of two cubes $C^- - C^+$ in $H_1(A_L; \rr) \cong \rr^S$.
    The $C^+$ cube is contained in $\rr^{S^+}$ and for $s \in S^+$ has length in the $s$-direction equal to $\rchi(\Lk_L(s))$.
    The $C^-$ cube is contained in $\rr^{S^-}$ and for $s \in S^-$ has length in the $s$-direction equal to $-\rchi(\Lk_L(s))$.
\end{Theorem}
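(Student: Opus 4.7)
The plan is to apply the polytope homomorphism $P$ to the explicit formula for $\gt^{(2)}_u(A_L)$ supplied by Corollary~\ref{c:raag}. Since RAAGs are torsion-free and satisfy the Atiyah conjecture by \cite{los12}, and since $\Wh(A_L)=1$ and $A_L$ is of determinant class (being residually finite, hence sofic), the whole Friedl--L\"uck polytope machinery applies: there is a well-defined homomorphism $P: D(A_L)^{\times} \to \cP(A_L)$ which descends to $\Wh(D(A_L))$, and by definition $P(A_L) = -P(\gt(A_L))$, where $\gt(A_L)$ is the image of $\gt^{(2)}_u(A_L)$ under $\Wh^w(A_L) \to \Wh(D(A_L))$.

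First I would deal with the integer factor $\prod_{n\ge 2} |H_{n-1}(L)_{\tor}|^{(-1)^n}$. Each $|H_{n-1}(L)_{\tor}|$ sits in $\zz \subset \zz A_L$ with support $\{1\}$, so its image under the projection $p:A_L \to H_1(A_L;\zz)/\tor$ is $\{0\} \subset \rr^S$. The convex hull is a single point, which is the identity of the monoid of polytopes and hence trivial in the Grothendieck group $\cP(A_L)$. Thus these factors make no contribution.

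Next, for each generator $s \in S$, the element $s-1 \in \zz A_L$ has support $\{1, s\}$, and under $p$ its image is $\{0,s\} \subset \rr^S \cong H_1(A_L;\rr)$, whose convex hull is the unit interval $[0,s]$ in the $s$-direction. Using that $P$ is a homomorphism (so products become Minkowski sums and integer exponents become scalar multiples), Corollary~\ref{c:raag} gives
\[
P(\gt(A_L)) = \sum_{s \in S} -\rchi(\Lk_L(s)) \cdot [0,s], \qquad P(A_L) = \sum_{s \in S} \rchi(\Lk_L(s)) \cdot [0,s].
\]
Splitting this sum by the sign of $\rchi(\Lk_L(s))$ and invoking the elementary fact, recalled just above the theorem, that the Minkowski sum of pairwise orthogonal intervals $\sum_{s \in T} \ell_s\, [0,s]$ is a box in $\rr^T$ with side length $\ell_s$ in direction $s$, the $S^+$ terms assemble into a cube in $\rr^{S^+}$ with side lengths $\rchi(\Lk_L(s))$, and the $S^-$ terms (after extracting the minus sign) assemble into a cube in $\rr^{S^-}$ with side lengths $-\rchi(\Lk_L(s))$. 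Matching the resulting formal difference with the statement identifies $C^+$ and $C^-$.

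There is no serious obstacle: once the polytope formalism is set up, the theorem is a bookkeeping consequence of Corollary~\ref{c:raag}. The only point requiring attention is the sign convention $P(G) = -P(\gt(G))$, which dictates which side of the formal difference is labeled $C^+$ versus $C^-$, and the use of the Atiyah conjecture (hence zero-divisor-free $D(A_L)$) which ensures $P$ is multiplicative and well-defined on $\Wh(D(A_L))$.
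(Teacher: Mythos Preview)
Your proposal is correct and follows exactly the paper's approach: the paper's ``proof'' is the short paragraph immediately preceding the theorem, which says to apply the polytope homomorphism $P$ to the formula of Corollary~\ref{c:raag}, observes that $P(s-1)=[0,s]$, and notes that Minkowski sums of orthogonal intervals are cubes. You have simply written out these steps in more detail (including the observation that the integer torsion factors have trivial polytope), so there is nothing to add.
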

\begin{remark}
    If $L$ is a graph, then $A_L$ is $L^2$-acyclic if and only if $L$ is a tree.
    In this case, $A_L$ is a fundamental group of a $3$-manifold, and Theorem~\ref{t:polytope} determines the dual of the unit ball of the associated Thurston norm (this also easily follows from Mayer--Vietoris formulas for $\gt$ of an amalgamated product.) In particular, since $\rchi(\Lk(s)) \ge 0$ for all $s$, we see that $P(A_L)$ is a single polytope.
\end{remark}

Given a polytope $P$ in a vector space $V$ and a homomorphism $\phi: V \to \rr$, the thickness of $P$ with respect to $\phi$ is the diameter of its image:
\[
\thi_\phi(P) = \max_{ P} \phi - \min_{P} \phi.
\]
This extends naturally to formal differences of polytopes in $V$ by taking the difference of thicknesses.
As explained in \cite{flt19}*{p.
73}, the results of \cite{fl19} imply that if $G$ is an $L^2$-acyclic type $F$ group satisfying the Atiyah conjecture, then the $L^{2}$-Euler characteristic of the kernel of any epimorphism $\phi: G \to \zz$ is well-defined and given by
\[
\chi^{(2)}(\ker \phi) = - \thi_\phi(P(G)).
\]
Meier, Meinert, and VanWyk \cite{mmv98} determined which characters $A_{L} \to \zz$ have kernels of type $FP(\qq )$.
In particular, we have the following corollary:
\begin{corollary}
    Let $L$ be $\qq$-acyclic and $\phi: A_L \to \zz$ be an epimorphism so that $\phi(s) \ne 0$ for each generator $s \in S$.
    Then the Euler characteristic of the kernel $\chi(\ker \phi)$ is well-defined, equals to $\chi^{(2)}(\ker \phi)$, and given by
    \[
    \chi(\ker \phi) = -\sum_{s \in S} \abs{\phi(s)} \rchi(\Lk_L(s)).
    \]
\end{corollary}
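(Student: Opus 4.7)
The plan is to apply the Friedl--L\"uck thickness formula $\chi^{(2)}(\ker\phi)=-\thi_\phi(P(A_L))$ recalled just above to the explicit description of $P(A_L)$ in Theorem~\ref{t:polytope}, and then identify $\chi^{(2)}(\ker\phi)$ with $\chi(\ker\phi)$. The $L^2$-acyclicity of $A_L$ needed to define both $P(A_L)$ and $\chi^{(2)}(\ker\phi)$ is subsumed in Theorem~\ref{t:polytope}.

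First, I observe that $\thi_\phi$ is additive under Minkowski sum --- since $\max_{P_1+P_2}\phi=\max_{P_1}\phi+\max_{P_2}\phi$ and similarly for the minimum --- so it extends to a group homomorphism $\cP(A_L)\to\rr$. Under the identification $H_1(A_L;\rr)\cong\rr^S$, Theorem~\ref{t:polytope} presents $P(A_L)$ as the formal Grothendieck-group combination $\sum_{s\in S}\rchi(\Lk_L(s))\,[0,s]$, where positive multiples contribute to the cube $C^+$ and negative multiples to the formal inverse of the cube $C^-$. Since $\thi_\phi([0,s])=|\phi(s)|$, linearity yields
\[
\thi_\phi(P(A_L))=\sum_{s\in S}\rchi(\Lk_L(s))\,|\phi(s)|,
\]
and the Friedl--L\"uck formula then gives $\chi^{(2)}(\ker\phi)=-\sum_{s\in S}|\phi(s)|\,\rchi(\Lk_L(s))$.

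To identify this with $\chi(\ker\phi)$, I would show $\ker\phi$ is of type $\mathrm{FP}(\qq)$ by Bestvina--Brady Morse theory on the universal cover of the Salvetti complex of $A_L$. The hypothesis $\phi(s)\neq 0$ for every generator makes $\phi$ into a proper Morse function whose descending link at each vertex is canonically identified with the flag complex $L$ itself: the signs of $\phi(s)$ only determine which of $\pm s$ is the descending direction, not the combinatorics of the link. Since $L$ is $\qq$-acyclic, Morse theory shows $\ker\phi$ is of type $\mathrm{FP}(\qq)$, and for groups of this type the classical rational Euler characteristic coincides with the $L^2$-Euler characteristic.

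The main obstacle is this last step. Once Theorem~\ref{t:polytope} is in hand, the thickness computation is an immediate consequence of the linearity of $\thi_\phi$, while verifying the FP-condition for $\ker\phi$ for general weights $\phi(s)\in\zz\setminus\{0\}$ requires a careful, if by now standard, adaptation of the original Bestvina--Brady Morse-theoretic argument.
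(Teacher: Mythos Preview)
Your proposal is correct and follows the route the paper intends: the corollary is stated without proof precisely because it is meant to drop out of the Friedl--L\"uck thickness formula applied to the polytope in Theorem~\ref{t:polytope} (equivalently, to the expression in Corollary~\ref{c:raag}), together with the fact that $\ker\phi$ is of type $\mathrm{FP}(\qq)$ when $\phi(s)\neq 0$ for all $s$; the paper alludes to \cite{mmvw98} for the latter, which amounts to the Bestvina--Brady Morse-theoretic argument you sketch.
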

This formula also holds for certain characters which send some generators to $0$, see \cite{mmv98} for the complete picture.

\begin{bibdiv}
    \begin{biblist}

\bib{abfg21}{arxiv}{
    author = {Abert, Miklos},
    author = {Bergeron, Nicolas},
    author = {Fraczyk, Mikolaj},
    author = {Gaboriau, Damien},
    title = {On homology torsion growth},
    date = {2021}, eprint={2106.13051},
    url = {https://arxiv.org/pdf/2106.13051.pdf}, }

\bib{agn17}{article}{
    author = {Abert, Miklos},
    author = {Gelander, Tsachik},
    author = {Nikolov, Nikolay},
    title = {Rank, combinatorial cost, and homology torsion growth in higher rank lattices},
    date = {2017},
    issn = {0012-7094},
    journal = {Duke Math. J.},
    volume = {166},
    number = {15},
    pages = {2925\ndash 2964},
    url = {https://mathscinet.ams.org/mathscinet-getitem?mr=3712168},
    review = {\MR{3712168}}, }

\bib{a76}{incollection}{
    author = {Atiyah, M.~F.},
    title = {Elliptic operators, discrete groups and von {N}eumann algebras},
    date = {1976},
    booktitle = {Colloque ``{A}nalyse et {T}opologie'' en l'{H}onneur de {H}enri {C}artan ({O}rsay, 1974)},
    pages = {43\ndash 72. Ast\'{e}risque, No. 32\ndash 33},
    url = {https://mathscinet.ams.org/mathscinet-getitem?mr=0420729},
    review = {\MR{0420729}}, }

\bib{aos21}{article}{
    author = {Avramidi, Grigori},
    author = {Okun, Boris},
    author = {Schreve, Kevin},
    title = {Mod {$p$} and torsion homology growth in nonpositive curvature},
    date = {2021},
    issn = {0020-9910},
    journal = {Invent. Math.},
    volume = {226},
    number = {3},
    pages = {711\ndash 723},
    url = {https://mathscinet.ams.org/mathscinet-getitem?mr=4337971},
    review = {\MR{4337971}}, }

\bib{bh99}{book}{
    author = {Bridson, Martin~R.},
    author = {Haefliger, Andr\'{e}},
    title = {Metric spaces of non-positive curvature},
    series = {Grundlehren der Mathematischen Wissenschaften [Fundamental Principles of Mathematical Sciences]}, publisher={Springer-Verlag, Berlin},
    date = {1999},
    volume = {319}, ISBN={3-540-64324-9},
    url = {https://mathscinet.ams.org/mathscinet-getitem?mr=1744486},
    review = {\MR{1744486}}, }

\bib{bs72}{article}{
    author = {Brieskorn, Egbert},
    author = {Saito, Kyoji},
    title = {Artin-{G}ruppen und {C}oxeter-{G}ruppen},
    date = {1972},
    issn = {0020-9910},
    journal = {Invent. Math.},
    volume = {17},
    pages = {245\ndash 271},
    url = {https://mathscinet.ams.org/mathscinet-getitem?mr=323910},
    review = {\MR{323910}}, }

\bib{b94}{book}{
    author = {Brown, Kenneth~S.},
    title = {Cohomology of groups},
    series = {Graduate Texts in Mathematics}, publisher={Springer-Verlag}, address={New York},
    date = {1994},
    volume = {87}, ISBN={0-387-90688-6},
    review = {\MR{1324339 (96a:20072)}}, }

\bib{cd95}{incollection}{
    author = {Charney, Ruth},
    author = {Davis, Michael~W.},
    title = {Finite {$K(\pi, 1)$}s for {A}rtin groups},
    date = {1995},
    booktitle = {Prospects in topology ({P}rinceton, {NJ}, 1994)},
    series = {Ann. of Math. Stud.},
    volume = {138}, publisher={Princeton Univ. Press}, address={Princeton, NJ},
    pages = {110\ndash 124},
    review = {\MR{1368655 (97a:57001)}}, }

\bib{cw02}{article}{
    author = {Cohen, Arjeh~M.},
    author = {Wales, David~B.},
    title = {Linearity of {A}rtin groups of finite type},
    date = {2002},
    issn = {0021-2172},
    journal = {Israel J. Math.},
    volume = {131},
    pages = {101\ndash 123},
    url = {https://mathscinet.ams.org/mathscinet-getitem?mr=1942303},
    review = {\MR{1942303}}, }

\bib{do12}{article}{
    author = {Davis, Michael~W.},
    author = {Okun, Boris},
    title = {Cohomology computations for {A}rtin groups, {B}estvina-{B}rady groups, and graph products},
    date = {2012},
    issn = {1661-7207},
    journal = {Groups Geom. Dyn.},
    volume = {6},
    number = {3},
    pages = {485\ndash 531},
    url = {https://mathscinet.ams.org/mathscinet-getitem?mr=2961283},
    review = {\MR{2961283}}, }

\bib{d72}{article}{
    author = {Deligne, Pierre},
    title = {Les immeubles des groupes de tresses g\'{e}n\'{e}ralis\'{e}s},
    date = {1972},
    issn = {0020-9910},
    journal = {Invent. Math.},
    volume = {17},
    pages = {273\ndash 302},
    url = {https://mathscinet.ams.org/mathscinet-getitem?mr=422673},
    review = {\MR{422673}}, }

\bib{es05}{article}{
    author = {Elek, G\'{a}bor},
    author = {Szab\'{o}, Endre},
    title = {Hyperlinearity, essentially free actions and {$L^2$}-invariants. {T}he sofic property},
    date = {2005},
    issn = {0025-5831},
    journal = {Math. Ann.},
    volume = {332},
    number = {2},
    pages = {421\ndash 441},
    url = {https://mathscinet.ams.org/mathscinet-getitem?mr=2178069},
    review = {\MR{2178069}}, }

\bib{fl17}{article}{
    author = {Friedl, Stefan},
    author = {L\"{u}ck, Wolfgang},
    title = {Universal {$L^2$}-torsion, polytopes and applications to 3-manifolds},
    date = {2017},
    issn = {0024-6115},
    journal = {Proc. Lond. Math. Soc. (3)},
    volume = {114},
    number = {6},
    pages = {1114\ndash 1151},
    url = {https://mathscinet.ams.org/mathscinet-getitem?mr=3661347},
    review = {\MR{3661347}}, }

\bib{fl19}{article}{
    author = {Friedl, Stefan},
    author = {L\"{u}ck, Wolfgang},
    title = {{$L^2$}-{E}uler characteristics and the {T}hurston norm},
    date = {2019},
    issn = {0024-6115},
    journal = {Proc. Lond. Math. Soc. (3)},
    volume = {118},
    number = {4},
    pages = {857\ndash 900},
    url = {https://mathscinet.ams.org/mathscinet-getitem?mr=3938714},
    review = {\MR{3938714}}, }

\bib{flt19}{incollection}{
    author = {Friedl, Stefan},
    author = {L\"{u}ck, Wolfgang},
    author = {Tillmann, Stephan},
    title = {Groups and polytopes},
    date = {2019},
    booktitle = {Breadth in contemporary topology},
    series = {Proc. Sympos. Pure Math.},
    volume = {102}, publisher={Amer. Math. Soc., Providence, RI},
    pages = {57\ndash 77},
    url = {https://mathscinet.ams.org/mathscinet-getitem?mr=3967360},
    review = {\MR{3967360}}, }

\bib{g08}{book}{
    author = {Geoghegan, Ross},
    title = {Topological methods in group theory},
    series = {Graduate Texts in Mathematics}, publisher={Springer, New York},
    date = {2008},
    volume = {243}, ISBN={978-0-387-74611-1},
    url = {http://dx.doi.org/10.1007/978-0-387-74614-2},
    review = {\MR{2365352 (2008j:57002)}}, }

\bib{g90}{unpublished}{
    author = {Green, Elisabeth~Ruth},
    title = {Graph products of groups}, publisher={University of Leeds},
    date = {1990},
    url = {https://etheses.whiterose.ac.uk/236/}, }

\bib{hk21}{article}{
    author = {Henneke, Fabian},
    author = {Kielak, Dawid},
    title = {Agrarian and {$L^2$}-invariants},
    date = {2021},
    issn = {0016-2736},
    journal = {Fund. Math.},
    volume = {255},
    number = {3},
    pages = {255\ndash 287},
    url = {https://doi.org/10.4064/fm808-4-2021},
    review = {\MR{4324826}}, }

\bib{h93}{article}{
    author = {Hu, Bi~Zhong},
    title = {Whitehead groups of finite polyhedra with nonpositive curvature},
    date = {1993},
    issn = {0022-040X},
    journal = {J. Differential Geom.},
    volume = {38},
    number = {3},
    pages = {501\ndash 517},
    url = {https://mathscinet.ams.org/mathscinet-getitem?mr=1243784},
    review = {\MR{1243784}}, }

\bib{j22}{article}{
    author = {Jankiewicz, Kasia},
    title = {Residual finiteness of certain 2-dimensional {A}rtin groups},
    date = {2022},
    issn = {0001-8708},
    journal = {Adv. Math.},
    volume = {405},
    pages = {Paper No. 108487, 37},
    url = {https://doi.org/10.1016/j.aim.2022.108487},
    review = {\MR{4437605}}, }

\bib{kkn17}{article}{
    author = {Kar, Aditi},
    author = {Kropholler, Peter},
    author = {Nikolov, Nikolay},
    title = {On growth of homology torsion in amenable groups},
    date = {2017},
    issn = {0305-0041},
    journal = {Math. Proc. Cambridge Philos. Soc.},
    volume = {162},
    number = {2},
    pages = {337\ndash 351},
    url = {https://mathscinet.ams.org/mathscinet-getitem?mr=3604918},
    review = {\MR{3604918}}, }

\bib{k20}{article}{
    author = {Kielak, Dawid},
    title = {The {B}ieri-{N}eumann-{S}trebel invariants via {N}ewton polytopes},
    date = {2020},
    issn = {0020-9910},
    journal = {Invent. Math.},
    volume = {219},
    number = {3},
    pages = {1009\ndash 1068},
    url = {https://mathscinet.ams.org/mathscinet-getitem?mr=4055183},
    review = {\MR{4055183}}, }

\bib{kr63}{article}{
    author = {Kwun, Kyung~Whan},
    author = {Raymond, Frank},
    title = {Mapping cylinder neighborhoods},
    date = {1963},
    issn = {0026-2285},
    journal = {Michigan Math. J.},
    volume = {10},
    pages = {353\ndash 357},
    url = {http://projecteuclid.org/euclid.mmj/1028998969},
    review = {\MR{156324}}, }

\bib{l18}{article}{
    author = {L\^{e}, Thang T.~Q.},
    title = {Growth of homology torsion in finite coverings and hyperbolic volume},
    date = {2018},
    issn = {0373-0956},
    journal = {Ann. Inst. Fourier (Grenoble)},
    volume = {68},
    number = {2},
    pages = {611\ndash 645},
    url = {https://mathscinet.ams.org/mathscinet-getitem?mr=3803114},
    review = {\MR{3803114}}, }

\bib{l83}{incollection}{
    author = {van~der Lek, Harm},
    title = {Extended {A}rtin groups},
    date = {1983},
    booktitle = {Singularities, {P}art 2 ({A}rcata, {C}alif., 1981)},
    series = {Proc. Sympos. Pure Math.},
    volume = {40}, publisher={Amer. Math. Soc., Providence, RI},
    pages = {117\ndash 121},
    url = {https://mathscinet.ams.org/mathscinet-getitem?mr=713240},
    review = {\MR{713240}}, }

\bib{los12}{article}{
    author = {Linnell, Peter},
    author = {Okun, Boris},
    author = {Schick, Thomas},
    title = {The strong {A}tiyah conjecture for right-angled {A}rtin and {C}oxeter groups},
    date = {2012},
    journal = {Geometriae Dedicata},
    volume = {158},
    number = {1},
    pages = {261\ndash 266}, }

\bib{l93}{article}{
    author = {Linnell, Peter~A.},
    title = {Division rings and group von {N}eumann algebras},
    date = {1993},
    issn = {0933-7741},
    journal = {Forum Math.},
    volume = {5},
    number = {6},
    pages = {561\ndash 576},
    review = {\MR{MR1242889 (94h:20009)}}, }

\bib{ll95}{article}{
    author = {Lott, John},
    author = {L{\"u}ck, Wolfgang},
    title = {${L}\sp 2$-topological invariants of $3$-manifolds},
    date = {1995},
    issn = {0020-9910},
    journal = {Invent. Math.},
    volume = {120},
    number = {1},
    pages = {15\ndash 60},
    review = {\MR{96e:58150}}, }

\bib{l13}{article}{
    author = {L\"{u}ck, W.},
    title = {Approximating {$L^2$}-invariants and homology growth},
    date = {2013},
    issn = {1016-443X},
    journal = {Geom. Funct. Anal.},
    volume = {23},
    number = {2},
    pages = {622\ndash 663},
    url = {https://mathscinet.ams.org/mathscinet-getitem?mr=3053758},
    review = {\MR{3053758}}, }

\bib{l94}{article}{
    author = {L{\"u}ck, Wolfgang},
    title = {Approximating ${L}\sp 2$-invariants by their finite-dimensional analogues},
    date = {1994},
    issn = {1016-443X},
    journal = {Geom. Funct. Anal.},
    volume = {4},
    number = {4},
    pages = {455\ndash 481},
    review = {\MR{95g:58234}}, }

\bib{l02}{book}{
    author = {L{\"u}ck, Wolfgang},
    title = {{$L^2$}-invariants: theory and applications to geometry and {$K$}-theory},
    series = {Ergebnisse der Mathematik und ihrer Grenzgebiete. 3. Folge / A Series of Modern Surveys in Mathematics.}, publisher={Springer-Verlag}, address={Berlin},
    date = {2002},
    volume = {44}, ISBN={3-540-43566-2},
    review = {\MR{1926649 (2003m:58033)}}, }

\bib{mmv98}{article}{
    author = {Meier, John},
    author = {Meinert, Holger},
    author = {VanWyk, Leonard},
    title = {Higher generation subgroup sets and the {$\Sigma$}-invariants of graph groups},
    date = {1998},
    issn = {0010-2571},
    journal = {Comment. Math. Helv.},
    volume = {73},
    number = {1},
    pages = {22\ndash 44},
    url = {https://mathscinet.ams.org/mathscinet-getitem?mr=1610579},
    review = {\MR{1610579}}, }

\bib{s16}{article}{
    author = {Sauer, Roman},
    title = {Volume and homology growth of aspherical manifolds},
    date = {2016},
    issn = {1465-3060},
    journal = {Geom. Topol.},
    volume = {20},
    number = {2},
    pages = {1035\ndash 1059},
    url = {https://mathscinet.ams.org/mathscinet-getitem?mr=3493098},
    review = {\MR{3493098}}, }

\end{biblist}

\end{bibdiv}

\end{document}